\newcommand{\bd}{\begin{description}}
\newcommand{\ed}{\end{description}}
\newcommand{\bi}{\begin{itemize}}
\newcommand{\ei}{\end{itemize}}
\newcommand{\be}{\begin{enumerate}}
\newcommand{\ee}{\end{enumerate}}
\newcommand{\beq}{\begin{equation}}
\newcommand{\eeq}{\end{equation}}
\newcommand{\beqs}{\begin{eqnarray*}}
\newcommand{\eeqs}{\end{eqnarray*}}
\newcommand{\flr}[1]{\left\lfloor #1 \right\rfloor}
\newcommand{\ceil}[1]{\left\lceil #1 \right\rceil}
\definecolor{DarkGreen}{rgb}{0.2, 0.6, 0.3}
\newtheorem{theorem}{Theorem}
\newtheorem{lemma}{Lemma}
\newtheorem{corollary}[theorem]{Corollary}
\newtheorem{case}{Case}
\newtheorem{claim}{Claim}
\newtheorem{fact}{Fact}
\begin{document}
\title{\textbf{Ramsey and Gallai-Ramsey numbers for two classes of unicyclic graphs}\footnote{Supported by the National Science Foundation of China (Nos. 11601254, 11551001, 11161037, 61763041, 11661068, and 11461054) and the Science Found of Qinghai Province (Nos.  2016-ZJ-948Q, and 2014-ZJ-907) and the  Qinghai Key Laboratory of Internet of Things Project (2017-ZJ-Y21).}}

\author{
Zhao Wang\footnote{College of Science, China Jiliang University, Hangzhou 310018, China. {\tt wangzhao@mail.bnu.edu.cn}},
Yaping Mao\footnote{School of Mathematics and Statistis, Qinghai Normal University, Xining, Qinghai 810008, China. {\tt maoyaping@ymail.com}},
\\
Colton Magnant\footnote{Department of Mathematics, Clayton State University, Morrow, GA, 30260, USA. {\tt dr.colton.magnant@gmail.com}},
Jinyu Zou\footnote{School of Computer, Qinghai Normal University, Xining, Qinghai 810008, China. {\tt zjydjy2015@126.com}}.
}


\maketitle

\begin{abstract}
Given a graph $G$ and a positive integer $k$, define the \emph{Gallai-Ramsey number} to be the minimum number of vertices $n$ such that any $k$-edge coloring of $K_n$ contains either a rainbow (all different colored) triangle or a monochromatic copy of $G$. In this paper, we consider two classes of unicyclic graphs, the star with an extra edge and the path with a triangle at one end. We provide the $2$-color Ramsey numbers for these two classes of graphs and use these to obtain general upper and lower bounds on the Gallai-Ramsey numbers.
\end{abstract}

\section{Introduction}

In this work, we consider only edge-colorings of graphs. A coloring of a graph is called
\emph{rainbow} if no two edges have the same color.

Colorings of complete graphs that contain no rainbow triangle have very interesting and somewhat surprising structure. In 1967, Gallai \cite{MR0221974} first examined this structure
under the guise of transitive orientations. The result was reproven in \cite{MR2063371} in the terminology of graphs and can also be traced
to \cite{MR1464337}. For the following statement, a trivial partition is a partition into only one part.

\begin{theorem}[\cite{MR1464337,MR0221974,MR2063371}]\label{Thm:G-Part}
In any coloring of a complete graph containing no rainbow
triangle, there exists a nontrivial partition of the vertices (that is, with at least two parts) such
that there are at most two colors on the edges between the parts and only one color on
the edges between each pair of parts.
\end{theorem}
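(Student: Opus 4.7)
I will prove Theorem \ref{Thm:G-Part} by combining a trivial palette case with a rainbow-triangle argument applied to the complement of a two-color subgraph. If the coloring uses only one or two colors, the partition of $V(K_n)$ into $n$ singletons is a nontrivial partition in which at most two colors appear between parts and each pair of parts is joined by a single edge. So I may assume at least three colors appear. Choose two colors $c_1,c_2$ (the choice will be constrained below), let $H$ be the spanning subgraph of edges colored $c_1$ or $c_2$, and let $\bar H$ denote its complement in $K_n$, consisting of all edges whose colors lie outside $\{c_1,c_2\}$. When $\bar H$ is disconnected, its vertex components $V_1,\ldots,V_k$ form a nontrivial partition in which every cross-edge lies in $H$, so at most two colors appear between parts.

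To force each pair of parts to be joined by a single color, suppose for contradiction that between $V_i$ and $V_j$ both $c_1$ and $c_2$ occur: say $ab$ has color $c_1$ and $a'b'$ has color $c_2$, with $a,a'\in V_i$ and $b,b'\in V_j$. Since $V_i$ and $V_j$ are connected in $\bar H$, choose a path $a=x_0,\ldots,x_m=a'$ in $\bar H[V_i]$ and a path $b=y_0,\ldots,y_r=b'$ in $\bar H[V_j]$, each edge of which is colored outside $\{c_1,c_2\}$. The cross-edges $x_0b,x_1b,\ldots,x_mb,a'y_1,\ldots,a'y_r$ are each colored $c_1$ or $c_2$; the first has color $c_1$ and the last has color $c_2$, so some consecutive pair of cross-edges disagrees in color, and the triangle formed by this pair together with the adjoining path edge (colored outside $\{c_1,c_2\}$) uses three distinct colors — a rainbow triangle, contradicting the hypothesis.

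The main obstacle is to choose $c_1,c_2$ so that $\bar H$ is disconnected, and this is where I expect the real difficulty to lie. My plan is to proceed by induction on $n$: delete a vertex $v$, apply the inductive hypothesis to obtain a Gallai partition of $K_n-v$ with between-part colors $c_1,c_2$, and then reinsert $v$. If every edge from $v$ to an existing part is colored inside $\{c_1,c_2\}$, then $v$ can be appended as a new singleton part (or absorbed into an existing one) while preserving the two-color between-part palette. Otherwise some edge $vw$ has color $c'\notin\{c_1,c_2\}$, and by applying the no-rainbow-triangle hypothesis to triangles $vww'$ with $w'$ in a different old part from $w$, one constrains $v$ to join the same old part as $w$, or forces a local refinement of the partition. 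The delicate step is the case analysis verifying that the updated partition still uses at most two colors between parts and exactly one color on each pair; correctly bookkeeping the interaction of $c'$ with the inductive palette is the principal technical hurdle.
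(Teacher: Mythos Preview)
The paper does not prove Theorem~\ref{Thm:G-Part}; it is quoted from \cite{MR1464337,MR0221974,MR2063371} and used as a black box throughout. So there is no ``paper's own proof'' to compare against, and your proposal must stand on its own.

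The pieces you have actually written out are correct. The singleton partition handles the at-most-two-colors case, and your walk argument showing that each pair of $\bar H$-components is joined by a single color is clean and is essentially the argument in \cite{MR2063371}: if $ab$ is $c_1$ and $a'b'$ is $c_2$, sliding along $\bar H$-paths produces two consecutive cross-edges of different colors from $\{c_1,c_2\}$ sharing a $\bar H$-edge, hence a rainbow triangle.

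The proposal has a genuine gap, however, and you have identified it yourself: you never establish that some choice of $\{c_1,c_2\}$ makes $\bar H$ disconnected. Your inductive sketch does not close this. Concretely, suppose the Gallai partition of $K_n-v$ uses $c_1,c_2$, and $v$ has an edge of color $c_3\notin\{c_1,c_2\}$ into every part. Your triangle observation does force all such cross-part $\bar H$-edges from $v$ to carry the \emph{same} color $c_3$, but it does not prevent $v$ from merging all the old parts into a single $\bar H$-component, which would make $\bar H$ connected and destroy the partition. At that point one must \emph{change} the color pair (for instance to $\{c_1,c_3\}$ or $\{c_2,c_3\}$) and argue that the new complement is disconnected; this is the substantive step in the Gy\'arf\'as--Simonyi proof, and it is not mere bookkeeping. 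As written, your ``reinsert $v$'' plan gives no mechanism for switching palettes and no argument that a switch succeeds, so the proof is incomplete at exactly the point where the real content of the theorem lies.
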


For ease of notation, we refer to a colored complete graph with no rainbow triangle as a \emph{Gallai-coloring} and the partition provided by Theorem~\ref{Thm:G-Part} as a \emph{Gallai-partition}. The induced subgraph of a Gallai colored complete graph constructed by selecting a single vertex from each part of a Gallai partition is called the \emph{reduced graph} of that partition. By Theorem~\ref{Thm:G-Part}, the reduced graph is a $2$-colored complete graph.

Given two graphs $G$ and $H$, let $R(G, H)$ denote the $2$-color Ramsey number for finding a monochromatic $G$ or $H$, that is, the minimum number of vertices $n$ needed so that every red-blue coloring of $K_{n}$ contains either a red copy of $G$ or a blue copy of $H$. Although the reduced graph of a Gallai partition uses only two colors, the original Gallai-colored complete graph could certainly use more colors. With this in mind, we consider the following generalization of the Ramsey numbers. Given two graphs $G$ and $H$, the \emph{general $k$-colored Gallai-Ramsey number} $gr_k(G:H)$ is defined to be the minimum integer $m$ such that every $k$-coloring of the complete graph on $m$ vertices contains either a rainbow copy of $G$ or a monochromatic copy of $H$. With the additional restriction of forbidding the rainbow copy of $G$, it is clear that $gr_k(G:H)\leq R_k(H)$ for any graph $G$.

In this work, we consider the Gallai-Ramsey numbers for finding either a rainbow triangle or monochromatic graph coming from two classes of unicyclic graphs: a star with an extra edge that forms a triangle, and a path with an extra edge from an end vertex to an internal vertex formaing a triangle. In order to produce sharp results for the Gallai-Ramsey numbers of these graphs, we first prove the $2$-color Ramsey numbers for these graphs.

These graphs are particularly interesting because although they are not bipartite, they are very close to being a tree (and therefore bipartite). The dichotomy between bipartite and non-bipartite graphs is critical in the study of Gallai-Ramsey numbers in light of the following result.

\begin{theorem}[\cite{GSSS10}]
Let $H$ be a fixed graph with no isolated vertices.  If $H$ is not bipartite, then $gr_{k}(K_{3} : H)$ is exponential in $k$.   If $H$ is bipartite, then $gr_{k}(K_{3} : H)$ is linear in $k$.
\end{theorem}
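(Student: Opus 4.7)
The claim is a dichotomy, so I treat the two cases separately and match an upper and a lower bound in each.

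\emph{Non-bipartite case.} Since $H$ has no isolated vertex and is not bipartite, it contains an odd cycle, hence $\chi(H)\ge 3$. The upper bound is immediate from the trivial inequality $gr_k(K_3:H)\le R_k(H)$ together with the well-known exponential upper bound on the $k$-color Ramsey number of any graph containing an odd cycle, obtained by iterating the Erd\H{o}s--Szekeres inequality. For the lower bound I would build a Gallai coloring of $K_n$ with $n$ exponential in $k$ that avoids a monochromatic $H$. The base template is the $2$-coloring of $K_5$ whose two color classes are edge-disjoint $5$-cycles; this is triangle-free in each color and rainbow-triangle-free. I then recursively substitute: each vertex of the template is blown up into a smaller Gallai coloring on a disjoint palette of $k-2$ colors, while the two fresh ``outer'' colors are used on the inter-blowup edges exactly as in the $K_5$. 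At every level each color class remains bipartite, so no odd cycle, and in particular no copy of $H$, appears monochromatically. The vertex count multiplies by $5$ per two added colors, yielding $gr_k(K_3:H)\ge c(H)\cdot 5^{k/2}$.

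\emph{Bipartite case.} For the lower bound, partitioning $K_n$ into $k$ blocks each of size $|V(H)|-1$, coloring the edges inside block $i$ with color $i$, and choosing the inter-block edges by the same $K_5$-style substitution halted at the right level already produces a construction with $n=\Theta(k)$ vertices. The substance is the upper bound, which I would prove by induction on $k$. Given a $k$-coloring of $K_n$ with no rainbow triangle, Theorem~\ref{Thm:G-Part} yields a Gallai partition into $t$ parts whose reduced graph uses only two colors. If $t\ge R(H,H)$, then the reduced graph already contains a monochromatic $H$, which lifts to a monochromatic $H$ in the original since each reduced edge represents a complete monochromatic bipartite graph between two parts. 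Hence $t<R(H,H)$, a constant depending only on $H$, and some part $V_1$ contains at least $(n-1)/(R(H,H)-1)$ vertices. Bipartiteness of $H$ then lets me argue that one of the two reduced-graph colors can be forbidden inside $V_1$ without loss of generality: if it reappeared densely inside $V_1$, combining that with the corresponding monochromatic bipartite structure across the cut would embed the two color classes of $H$. This drops the number of colors available in $V_1$ by one, and induction gives $gr_k(K_3:H)\le gr_{k-1}(K_3:H)+f(H)$ for some function $f$ depending only on $H$, which is linear in $k$.

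\emph{Main obstacle.} The delicate step is the color-reduction inside the large block $V_1$: converting ``$H$ is bipartite'' into ``one color from the reduced graph can be deleted inside $V_1$,'' and controlling the additive constant carefully enough that the resulting recurrence is genuinely linear rather than, say, geometric. The non-bipartite side and both lower bounds, by comparison, follow the standard blow-up and substitution templates.
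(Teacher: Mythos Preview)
The paper does not prove this theorem; it is quoted from \cite{GSSS10} as background, so there is no in-paper proof to compare your proposal against.

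Independently of that, your non-bipartite lower bound has a real gap. You claim that in the iterated $K_5$ blow-up ``at every level each color class remains bipartite.'' This is false: in the unique $2$-coloring of $K_5$ with no monochromatic triangle each color class is a $5$-cycle, and the blow-up of $C_5$ still has odd girth $5$, hence is not bipartite. Concretely, if $H=C_5$ then your construction contains a monochromatic $H$ in each of the two outer colors at every level. The $K_5$ template succeeds in Lemma~\ref{Lem:GRSt+} of this paper only because $S_t^+$ contains a triangle and the outer color classes are \emph{triangle-free}; for general non-bipartite $H$ that is not enough. The standard fix is a different recursion: take two disjoint copies of the $(k-1)$-color construction and put all cross edges in color $k$. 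Color $k$ is then complete bipartite, so it cannot contain any non-bipartite $H$, and no rainbow triangle is created; this yields $gr_k(K_3:H)-1\ge 2\bigl(gr_{k-1}(K_3:H)-1\bigr)$, i.e.\ growth at least $2^k$.

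Your bipartite upper bound is in the right framework, but the ``delicate step'' you flag is not yet an argument, and in fact the mechanism is simpler than the color-deletion you sketch. Since $H$ is bipartite, $H\subseteq K_{|H|,|H|}$, so any two Gallai-partition parts each of order at least $|H|$ already span a monochromatic $H$ across the cut. Thus at most one part is large; the remaining at most $R(H,H)-2$ parts contribute a constant number of vertices, and recursing into the single large part (where at least one of the two cut colors is now forbidden for the same reason) gives the linear recurrence directly.
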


We refer the interested reader to \cite{MR1670625} for a dynamic survey of small Ramsey numbers and \cite{FMO14} for a dynamic survey of rainbow generalizations of Ramsey theory, including topics like Gallai-Ramsey numbers.

Section~\ref{Section:Prelim} contains two known results that will be used as part of our proofs. Section~\ref{Section:StarPlus} discusses the case where $H$ is a star with the addition of an extra edge to form a triangle. Section~\ref{Section:PathPlus} discusses the case where $H$ is a path with the addition of an extra edge betwen the first vertex and the third vertex, again forming a triangle.

\section{Preliminaries}\label{Section:Prelim}

In this section, we recall two results about cycles and a helpful lemma which will be used later in our proofs. First the known Ramsey number for cycles.

\begin{theorem}[\cite{MR0345866, MR1846919, MR0332567}]\label{Thm:RamseyCycles}
Given integers $m, n \geq 3$,
$$
R(C_{m}, C_{n}) = \begin{cases}
2n - 1 \\
~ ~ ~ ~ \text{ for $3 \leq m \leq n$, $m$ odd, $(m, n) \neq (3, 3)$,}\\
n - 1 + m/2 \\
~ ~ ~ ~ \text{ for $4 \leq m \leq n$, $m$ and $n$ even, $(m, n) \neq (4, 4)$,}\\
\max\{ n - 1 + m/2, 2m - 1\}\\
~ ~ ~ ~ \text{ for $4 \leq m < n$, $m$ even and $n$ odd.}
\end{cases}
$$
\end{theorem}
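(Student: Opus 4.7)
The plan is to establish the lower and upper bounds separately and match them in each of the three parity regimes. For the lower bounds, I would exhibit explicit two-colorings of a complete graph of the appropriate size avoiding the forbidden monochromatic cycles. When $m$ is odd with $3 \le m \le n$ and $(m,n)\neq(3,3)$, I would color $K_{2n-2}$ by partitioning the vertex set into two classes $A, B$ of size $n-1$ each, coloring all edges inside each class red and all crossing edges blue; the red graph is a disjoint union of two $K_{n-1}$'s, so its longest cycle has length at most $n-1$, while the blue graph is the bipartite graph $K_{n-1,n-1}$, which contains no odd cycle and hence no $C_m$. When $m$ and $n$ are both even with $(m,n)\neq(4,4)$, I would build a coloring on $K_{n+m/2-2}$ by taking a red clique on a set $A$ of $n-1$ vertices and joining an additional set $B$ of $m/2-1$ vertices to everything (including inside $B$) in blue: no red $C_n$ exists because the red graph has only $n-1$ vertices, and any blue cycle must alternate through $B$ (since $A$ has no blue edge), forcing at least $m/2$ vertices from $B$, contradicting $|B|=m/2-1$. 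For the mixed case, the two ideas applied in parallel yield both expressions inside the $\max$.

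For the upper bound, I would induct on $m+n$, handling small cases separately, which is precisely where $(3,3)$ and $(4,4)$ become exceptional. With $N$ equal to the claimed value, suppose for contradiction that $K_N$ is red-blue colored with no red $C_m$ and no blue $C_n$. The main extremal tools I would invoke are the Erd\H{o}s-Gallai bound on the number of edges in a graph avoiding cycles of a given length, together with Bondy's theorem that a Hamiltonian graph on $N$ vertices with more than $N^2/4$ edges is pancyclic unless it is bipartite. After extracting a longest monochromatic cycle $C$ of length $\ell$ in one colour, I would analyse the edges from $V(C)$ to the remainder and the edges inside the remainder, using standard chord and rotation arguments to either increase $\ell$ up to $m$ or to force a long cycle in the opposite colour.

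The hard part, as in the original proofs of Faudree-Schelp and Rosta, is the endgame in which the longest monochromatic cycle has length just below its target. In the odd case one must exploit weak pancyclicity to slide between consecutive cycle lengths of the same parity and then close an odd cycle by a single chord; in the even-even case one leverages that the extremal blue graph is nearly bipartite and push a parity argument to force the correct $C_n$. Once the three regimes are verified separately and the $\max$ in the mixed case is checked against both constructions, the stated formula follows by combining the matching upper and lower bounds.
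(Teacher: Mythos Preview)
The paper does not prove this theorem at all: it is stated in the Preliminaries section as a known result, with citations to Faudree--Schelp, Rosta, and K\'arolyi--Rosta, and is then used as a black box in the proof of Theorem~\ref{Thm:RamseyPt+}. So there is no ``paper's own proof'' to compare your attempt against.

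As for your sketch itself: the lower-bound constructions are essentially the standard ones and are fine (though your phrasing for the even--even case is slightly loose --- a blue cycle need not strictly alternate between $A$ and $B$, since $B$ carries blue edges internally; the correct count is that no two consecutive vertices of a blue cycle can lie in $A$, so a blue $C_m$ would need at least $m/2$ vertices from $B$). Your upper-bound outline names the right ingredients (induction on $m+n$, Erd\H{o}s--Gallai edge bounds, Bondy pancyclicity, rotation/chord arguments on a longest monochromatic cycle), and this is indeed the architecture of the original proofs. But be aware that what you call ``the endgame'' is not a short wrap-up: the full argument in the Faudree--Schelp and Rosta papers runs to many pages of delicate case analysis, and your paragraph does not yet constitute a proof so much as a plausible plan. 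If you actually need this result for your own work, citing it as the present paper does is the appropriate move.
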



Next the general $k$-color Gallai-Ramsey number for even cycles is not yet known but the following bound have been shown.

\begin{theorem}[\cite{FM11}, \cite{MR3121112}] \label{Thm:GR-EvenCycles}
Given integers $n \geq 2$ and $k \geq 1$,
$$
(n - 1)k + n + 1 \leq gr_{k} (K_{3} : C_{2n}) \leq (n - 1)k + 3n.
$$
\end{theorem}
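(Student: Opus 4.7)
The plan is to build, for every $k \geq 2$, a Gallai $k$-coloring of $K_{(n-1)k + n}$ that avoids a monochromatic $C_{2n}$. Start from an extremal $2$-colored $K_{3n-2}$ with no monochromatic $C_{2n}$, which exists because $R(C_{2n},C_{2n}) = 3n-1$ by Theorem~\ref{Thm:RamseyCycles}; this handles $k=2$. To pass from $k-1$ colors to $k$, adjoin a fresh block $B$ of $n-1$ vertices and give color $c_k$ to every edge inside $B$ and every edge from $B$ to the previously built part. Then (i) no rainbow triangle is created, since any triangle touching $B$ uses only $c_k$ plus possibly one earlier color; and (ii) the subgraph in color $c_k$ is a graph on $|B|=n-1$ vertices joined completely to the old side, so any $c_k$-cycle meets the old side in an independent $c_k$-set and hence alternates, containing at most $2|B| = 2n-2 < 2n$ vertices. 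Induction therefore yields a Gallai $k$-coloring on $3n-2 + (k-2)(n-1) = (n-1)k + n$ vertices with no monochromatic $C_{2n}$, giving $gr_k(K_3:C_{2n}) \geq (n-1)k+n+1$.

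\paragraph{Upper bound, overall strategy.} Induct on $k$. For $k=1$ the bound is trivial and for $k=2$ it follows from Theorem~\ref{Thm:RamseyCycles} since $(n-1)\cdot 2 + 3n = 5n-2 \geq 3n-1 = R(C_{2n},C_{2n})$. For $k \geq 3$, take a Gallai $k$-coloring of $K_N$ with $N=(n-1)k+3n$ and apply Theorem~\ref{Thm:G-Part} to extract a Gallai partition $V_1,\dots,V_t$ (chosen with $t$ minimum) whose reduced graph is $2$-colored, say in red and blue. If $t \geq R(C_{2n},C_{2n}) = 3n-1$ then the reduced graph already contains a monochromatic $C_{2n}$, and because each pair of parts is joined by a single color, selecting one representative vertex per part lifts this to a monochromatic $C_{2n}$ in the host graph. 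So assume $t \leq 3n-2$.

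\paragraph{Exploiting the blow-up and finishing the induction.} With $t \leq 3n-2$ and $N = (n-1)k + 3n$, pigeonhole gives a largest part $V_1$ with $|V_1| \geq \lceil N/t \rceil$. The core of the argument is a case split. First, if any monochromatic (say red) path $V_{i_1}V_{i_2}\cdots V_{i_\ell}$ of the reduced graph carries a total of at least $2n$ vertices across its parts, the red complete bipartite blow-up between consecutive parts lets one weave a red $C_{2n}$ (use $n$ vertices on each ``side'' of the zig-zag, closing the cycle by the last red edge in the reduced path or by a chord within a sufficiently large part). Second, if no such long monochromatic blow-up cycle is available, then the red and blue reduced graphs are both very restricted; combined with $t \leq 3n-2$ this forces $|V_1| \geq (n-1)(k-1) + 3n$, at which point I would induct on the Gallai coloring restricted to $V_1$. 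For the induction to save a color, argue that one may assume one of the two frame colors (red or blue) does not appear inside $V_1$, otherwise a short red/blue path in the reduced graph together with red/blue edges inside $V_1$ creates the desired $C_{2n}$ by the blow-up stitching above.

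\paragraph{Main obstacle.} The delicate step is the color-reduction inside the largest part: one must show that, unless a monochromatic $C_{2n}$ is already present, at most $k-1$ colors appear inside $V_1$, so that the induction hypothesis applies to $V_1$ with parameter $k-1$ and order at least $(n-1)(k-1)+3n$. Matching the additive constant $3n$ — rather than the tighter $n+1$ of the lower bound — is precisely what this blow-up-plus-induction scheme affords, and closing the gap would require a substantially finer analysis of the reduced graph's structure.
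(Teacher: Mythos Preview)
The paper does not prove this theorem at all: it is quoted from \cite{FM11} and \cite{MR3121112} as a preliminary tool, with no argument supplied. So there is no ``paper's own proof'' to compare against, and your write-up should be evaluated on its own.

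Your lower-bound construction is correct and essentially the standard one: the base $K_{3n-2}$ from the $2$-color Ramsey extremal example, together with successively adjoined blocks of size $n-1$ in fresh colors, gives a Gallai coloring on $(n-1)k+n$ vertices with no monochromatic $C_{2n}$, and your checks for the absence of rainbow triangles and of long monochromatic cycles in the new color are sound.

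Your upper-bound sketch, however, has a real gap that you yourself flag but do not close. After reducing to $t\le 3n-2$ parts, pigeonhole only yields $|V_1|\ge N/(3n-2)$, which is of order $k/3$ for large $k$ and nowhere near the $(n-1)(k-1)+3n$ required to invoke the induction hypothesis on $V_1$. The dichotomy you propose (either some monochromatic reduced path carries $\ge 2n$ vertices, or else $|V_1|$ is huge) is not established: bounding the total weight on every monochromatic reduced path by $2n-1$ does not by itself force almost all of the $N$ vertices into a single part. Likewise, the assertion that one of red or blue can be assumed absent inside $V_1$ is stated but not argued; a single red edge inside $V_1$ together with a red reduced path through $V_1$ does not automatically yield a red $C_{2n}$, since you need the cycle to close, not just a long path or a triangle-plus-path. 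These are exactly the points where the published proofs in \cite{FM11,MR3121112} do nontrivial work, and your outline does not yet supply a substitute. As written, the upper bound is a plausible strategy memo rather than a proof.
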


Finally we present a lemma from \cite{OddCycles}.

\begin{lemma}[\cite{OddCycles}]\label{Lemma:SmallOutside}
Let $k \geq 3$, $H$ be a graph with $|H| = m$, and let $G$ be a Gallai coloring of the complete graph $K_{n}$ containing no monochromatic copy of $H$. If $G = A \cup B_{1} \cup B_{2} \cup \dots \cup B_{k - 1}$ where $A$ uses at most $k$ colors (say from $[k]$), $|B_{i}| \leq m - 1$ for all $i$, and all edges between $A$ and $B_{i}$ have color $i$, then $n \leq gr_{k}(K_{3} : H) - 1$.
\end{lemma}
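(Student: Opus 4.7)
The plan is to construct from $G$ a Gallai $k$-coloring $G^{*}$ of $K_n$ using only the colors in $[k]$ and containing no monochromatic copy of $H$; the bound $n \leq gr_k(K_3 : H) - 1$ will then follow at once from the definition of the Gallai-Ramsey number.

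The first step is a structural observation that exploits the Gallai condition. For any $a \in A$, $v \in B_i$, and $w \in B_j$ with $i \neq j$, the triangle $avw$ has $c(av) = i$ and $c(aw) = j$, so avoiding a rainbow triangle forces $c(vw) \in \{i, j\} \subseteq [k-1]$. Combined with the assumptions that $A$ uses only colors in $[k]$ and that each $A$-to-$B_i$ edge has color $i \in [k-1]$, this shows that any color outside $[k]$ can appear only inside some single $B_i$.

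I would then form $G^{*}$ by recoloring every edge inside some $B_i$ whose original color lies outside $[k]$ with color $k$, leaving all other edges untouched. Clearly $G^{*}$ uses only colors from $[k]$. To see that $G^{*}$ remains Gallai, consider any triangle of $G^{*}$ that contains a recolored edge $vv'$ (necessarily inside some $B_i$). If the third vertex lies in $A$, the other two edges are both color $i$; if it lies in $B_j$ with $j \neq i$, the other two edges use colors in $\{i, j\}$ and either agree or, if they disagree, the Gallai property of $G$ already forced the original color of $vv'$ to lie in $\{i, j\} \subseteq [k]$, contradicting that $vv'$ was recolored; and if the third vertex also lies in $B_i$, a straightforward case analysis on how many of the three edges were recolored (using the Gallai property of $G$) rules out a rainbow triangle.

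The hardest part is showing that $G^{*}$ contains no monochromatic copy of $H$. For any $i \in [k-1]$, the color-$i$ subgraph of $G^{*}$ is identical to that of $G$, and so contains no $H$ by hypothesis. For color $k$, note that color $k$ appears neither between $A$ and any $B_i$ (those edges have color $i \neq k$) nor between any two $B_i, B_j$ (those edges use colors in $\{i, j\} \subseteq [k-1]$); hence every connected component of the color-$k$ subgraph of $G^{*}$ lies entirely within $A$ or entirely within a single $B_i$. Since the graphs $H$ considered in this paper are connected, any monochromatic copy of $H$ in color $k$ must fit inside one such component: it cannot fit within $A$, since the color-$k$ subgraph on $A$ is unchanged from $G$ and $G$ contains no monochromatic $H$, and it cannot fit within any $B_i$, since $|B_i| \leq m - 1 < |H|$. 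Therefore $G^{*}$ is a Gallai $k$-coloring of $K_n$ with no monochromatic $H$, which yields $n \leq gr_k(K_3 : H) - 1$.
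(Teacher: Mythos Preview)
The paper does not actually prove this lemma; it is quoted from \cite{OddCycles} and used as a black box. So there is no ``paper's own proof'' to compare against, and your argument must be judged on its merits.

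Your approach---recolor all edges inside the $B_i$ whose color falls outside $[k]$ to color $k$, then verify that the resulting $G^{*}$ is still a rainbow-triangle-free $k$-coloring with no monochromatic $H$---is the natural one and is essentially correct. The structural observation that any edge between distinct $B_i$ and $B_j$ must already carry a color in $\{i,j\}$ is the key point, and your verification that no rainbow triangle is introduced is complete once the three-cases-inside-$B_i$ analysis is written out (which it can be, exactly as you sketch).

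Two small caveats are worth flagging. First, your structural observation requires picking a vertex $a\in A$, so you are tacitly assuming $A\neq\emptyset$; this is harmless for every application in the paper but is not stated in the lemma. Second, and more importantly, your color-$k$ argument uses that $H$ is connected in order to confine a hypothetical color-$k$ copy of $H$ to a single $B_i$ or to $A$. You acknowledge this explicitly, and since the only graphs $H$ used here are $S_t^{+}$ and $P_t^{+}$, both connected, the lemma as applied is fully justified by your proof. If one wanted the lemma for disconnected $H$ one would need a different finish (bounding the total number of vertices available for color $k$), but that is not needed in this paper.
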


Note that this lemma uses the assumed structure to provide a bound on $|G|$ even if $G$ itself uses more than $k$ colors.


\section{Star with an extra edge}\label{Section:StarPlus}

Let $S_{t}$ denote the star with $t$ total vertices (and $t - 1$ edges). Then for $t \geq 3$, let $S_{t}^{+}$ denote graph consisting of the star $S_{t}$ with the addition of an edge between two of the pendant vertices, forming a triangle. Note that $S_{3}^{+} = K_{3}$.

Before beginning the discussion of the Gallai-Ramsey number for $S_{t}^{+}$, we must first find the $2$-color Ramsey number.

\begin{theorem}\label{Thm:RamseySt+}
For $t \geq 3$,
$$
R(S_{t}^{+}, S_{t}^{+}) = 2t - 1.
$$
\end{theorem}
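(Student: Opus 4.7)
For the lower bound $R(S_t^+, S_t^+) \geq 2t-1$, I would exhibit a red-blue coloring of $K_{2t-2}$ with no monochromatic $S_t^+$ by partitioning the vertex set into two parts of size $t-1$, coloring edges inside each part red and edges between the parts blue. The red subgraph is two disjoint copies of $K_{t-1}$, which cannot contain $S_t^+$ since $S_t^+$ has $t$ vertices (equivalently, the maximum degree in $K_{t-1}$ is $t-2$ whereas the center of $S_t^+$ has degree $t-1$). The blue subgraph is the bipartite graph $K_{t-1,t-1}$, which is triangle-free, and since $S_t^+$ contains a triangle this rules out a blue copy as well.

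For the upper bound I would argue by contradiction, assuming a red-blue coloring of $K_{2t-1}$ with no monochromatic $S_t^+$. First I pin down all degrees. For any vertex $v$ with $d_R(v) \geq t-1$, the set $N_R(v)$ can contain no red edge (otherwise $v$ together with $t-1$ red neighbors including the two endpoints of that red edge yields a red $S_t^+$), so $N_R(v)$ induces a blue clique. If $|N_R(v)| \geq t$, any vertex of that blue clique has at least $t-1$ blue neighbors in it, any two of which are blue-adjacent, producing a blue $S_t^+$. Hence $d_R(v) \leq t-1$, and symmetrically $d_B(v) \leq t-1$; since they sum to $2t-2$, both equal $t-1$. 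Consequently, for every vertex $v$, $N_R(v)$ is a blue clique and $N_B(v)$ is a red clique, each of size $t-1$.

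Fixing a vertex $v$ and setting $A = N_R(v)$, $B = N_B(v)$, I would then double-count the blue edges between $A$ and $B$. Each $u \in A$ has $t-2$ blue edges inside $A$ and the edge $uv$ is red, so of its $t-1$ blue edges exactly one lies in $B$; summing over $A$ yields $t-1$ blue $A$-$B$ edges. Symmetrically, each $w \in B$ has $t-2$ blue neighbors in $A$, giving $(t-1)(t-2)$ such edges. Equating these counts forces $t-1 = (t-1)(t-2)$, hence $t=3$, contradicting $t \geq 4$. The main obstacle is the initial degree-pinning step, which must simultaneously invoke the absence of red and blue $S_t^+$ to force exact $(t-1)$-regularity; once this rigidity is in place, the structural conclusion that $A$ is a blue clique and $B$ a red clique is immediate and the final double count is routine. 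One caveat: since $S_3^+ = K_3$ and $R(K_3,K_3)=6 \neq 5$, the stated equality $R(S_t^+, S_t^+) = 2t-1$ really holds only for $t \geq 4$, and the small case $t=3$ should be excluded or treated separately.
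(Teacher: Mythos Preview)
Your proof is correct, and the lower bound construction as well as the degree-pinning step (forcing $d_R(v)=d_B(v)=t-1$ and hence $N_R(v)$, $N_B(v)$ to be monochromatic cliques in the opposite color) are exactly what the paper does. Where you diverge is in the finishing move. The paper simply picks any $r\in A=N_R(v)$ and $b\in B=N_B(v)$ and looks at the single edge $rb$: if $rb$ is red, then $b$ has $t-1$ red neighbors $\{r\}\cup(B\setminus\{b\})$, and since $B\setminus\{b\}$ is a red clique of size $t-2\ge 2$ (for $t\ge 4$) there is a red edge among them, giving a red $S_t^+$ centered at $b$; the blue case is symmetric. Your double-counting of blue $A$--$B$ edges is a valid alternative that reaches the same contradiction $t-1=(t-1)(t-2)$, but it is more machinery than needed; the paper's one-edge argument is strictly shorter.

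You are also right about the caveat: since $S_3^+=K_3$ and $R(K_3,K_3)=6$, the stated range $t\ge 3$ in the theorem is an oversight, and the equality $R(S_t^+,S_t^+)=2t-1$ holds only for $t\ge 4$. The paper's own proof breaks at exactly the same point yours does (when $t=3$ the set $B\setminus\{b\}$ has a single vertex and contains no edge), so this is a genuine correction to the statement rather than a defect of either argument.
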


\begin{proof}
The lower bound follows from the graph constructed by taking two copies of $K_{t-1}$ each colored entirely with red and adding all blue edges in between the two copies. Each red component is too small to contain a monochromatic copy of $S_{t}^{+}$ and the blue subgraph is bipartite so cannot contain a copy of $S_{t}^{+}$. The order of this constructed graph is $2t-2$, meaning that the Ramsey number is at least $2t-1$.

For the upper bound, consider an arbitrary vertex $v$ in any $2$-coloring of $K_{2t - 1}$. If $v$ has at least $t - 1$ incident red edges, then the graph induced on the set of vertices at the opposite end of these red edges must contain no red edges to avoid a red $S_{t}^{+}$. This induces a blue complete graph so there must be at most $t - 1$ such vertices. The same holds for incident blue edges at $v$, meaning that $v$ must have precisely $t - 1$ incident red edges and $t - 1$ incident blue edges. Let $R$ and $B$ be the corresponding sets of vertices, as above, inducing blue and red complete graphs respectively and let $r \in V(R)$ and $b \in V(B)$. The edge $rb$ must be either red or blue, but either one creates the desired monochromatic copy of $S_{t}^{+}$ centered at one of $r$ or $b$, completing the proof.
\end{proof}

Next we prove a lemma which provides the lower bound on the Gallai-Ramsey number.

\begin{lemma}\label{Lem:GRSt+}
For $k \geq 1$,
$$
gr_{k}(K_{3}:S_{t}^{+})\geq\begin{cases}
2(t-1)\cdot5^{\frac{k-2}{2}}+1 & \text{ if $k$ is even,}\\
(t-1)\cdot5^{\frac{k-1}{2}}+1 & \text{ if $k$ is odd.}
\end{cases}
$$
\end{lemma}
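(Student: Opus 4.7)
The plan is to exhibit, for each $k \geq 1$, a $k$-colored Gallai coloring of $K_n$ with $n$ equal to the claimed lower bound minus one that contains no monochromatic $S_t^+$. I would build these constructions inductively in steps of two colors. For the base case $k = 1$, color $K_{t-1}$ in a single color; since $|V(S_t^+)| = t$, no monochromatic copy can appear, matching the value $(t-1)\cdot 5^0$. For the base case $k = 2$, reuse the Ramsey lower-bound construction from the proof of Theorem~\ref{Thm:RamseySt+}: two color-$1$ copies of $K_{t-1}$ joined by all color-$2$ edges, on $2(t-1) = 2(t-1)\cdot 5^0$ vertices. These two starts produce the two parities of the formula.

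For the inductive step from $k$ to $k+2$, given a valid construction $G_k$, I would take five vertex-disjoint copies $B_1, \ldots, B_5$ of $G_k$ and introduce two new colors $c_1, c_2$ between blobs so that the reduced $K_5$ on the blobs is the unique $2$-coloring of $K_5$ with no monochromatic triangle: color $c_1$ on the Hamilton cycle $B_1B_2B_3B_4B_5B_1$ and color $c_2$ on the complementary pentagram. This multiplies the vertex count by $5$ and the color count by $+2$, so the order evolves as $(t-1)\cdot 5^{(k-1)/2}$ when $k$ is odd and $2(t-1)\cdot 5^{(k-2)/2}$ when $k$ is even, matching the claimed bound.

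Two properties need to be verified at each step. For the Gallai property, any triangle either lives inside a single blob (Gallai by induction), spans exactly two blobs (using one old color within a blob and one of $\{c_1, c_2\}$ twice for the two inter-blob edges), or has its three vertices in three different blobs (using only colors from $\{c_1, c_2\}$, because the reduced $K_5$ has no monochromatic triangle and uses only these two colors); in no case are three distinct colors present. For the absence of a monochromatic $S_t^+$, in any old color the full monochromatic subgraph is contained in a single $B_i$, and the induction hypothesis applies. In color $c_1$, the $c_1$-neighborhood of any vertex $v \in B_i$ lies in $B_{i-1} \cup B_{i+1}$; these two blobs are at cyclic distance $2$ in the Hamilton cycle and therefore span no $c_1$-edges, while within each $B_{i\pm 1}$ the edges carry only old colors. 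Hence the $c_1$-neighborhood of $v$ spans no $c_1$-edge, ruling out the triangle at the star-center of any would-be $S_t^+$. The same analysis applies to $c_2$ using the complementary pentagram, whose two non-adjacent blob-pairs are again at cyclic distance $2$ in the pentagram cycle.

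The main point to watch is this last check: the whole argument hinges on the fact that, after a $C_5$-type blow-up, the monochromatic neighborhood of any vertex in one of the two new colors lies in a pair of blobs that contain no new-color edge between them. Once this local observation is confirmed for both $c_1$ and $c_2$, the induction closes cleanly and the parity split in the exponent of $5$ falls out directly from the two base cases.
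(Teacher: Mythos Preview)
Your proposal is correct and follows essentially the same approach as the paper: the identical base cases $G_{1}=K_{t-1}$ and $G_{2}$ equal to the sharpness example from Theorem~\ref{Thm:RamseySt+}, and the identical inductive step of blowing up the unique $2$-colored $K_{5}$ with no monochromatic triangle by five copies of the previous construction. Your verification that the new colors admit no monochromatic $S_{t}^{+}$ via the neighborhood argument is just a slightly more explicit way of saying what the paper records in one line, namely that the new colors contain no monochromatic triangle and hence no $S_{t}^{+}$.
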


\begin{proof}
We prove this result by inductively constructing a $k$-coloring of $K_{n}$ where
$$
n=\begin{cases}
2(t-1)\cdot5^{\frac{k-2}{2}} & \text{ if $k$ is even,}\\
(t-1)\cdot5^{\frac{k-1}{2}} & \text{ if $k$ is odd,}
\end{cases}
$$
which contains no rainbow triangle and no monochromatic copy of $S_{t}^{+}$.

If $k$ is odd, let $G_{1}$ be a complete graph on $t - 1$ vertices colored entirely with color $1$. Note that with only $t - 1$ vertices, this contains no monochromatic copy of $S_{t}^{+}$. 
Suppose we have constructed a colored complete graph $G_{2i-1}$ where $i$ is a positive integer and $2i-1<k$, using the $2i-1$ colors 1,2, \ldots, $2i-1$ and having order $n_{2i-1}=(t-1)\cdot5^{i-1}$. Construct $G_{2i+1}$ by making five copies of $G_{2i-1}$ and inserting edges of color $2i$ and $2i+1$ between the copies to form a blow-up of the unique 2-colored $K_{5}$ which contains no monochromatic triangle. This coloring clearly contains no rainbow triangle and, since there is no monochromatic triangle in either of the two new colors, there can be no monochromatic copy of $S_{t}^{+}$ in $G_{2i+1}$. Ultimately, $G_{k}$ is a $k$-colored complete graph containing no rainbow triangle and no monochromatic copy of $S_{t}^{+}$ with $|G_{k}| = (t - 1) \cdot 5^{(k - 1)/2}$.

If $k$ is even, let $G_{2}$ be a $2$-colored complete graph on $2t-2$ vertices containing no monochromatic copy of $S_{t}^{+}$ using colors $1$ and $2$. That is, $G_{2}$ is the sharpness example from Theorem~\ref{Thm:RamseySt+}. Suppose we have constructed a coloring of $G_{2i}$ where $i$ is a positive integer and $2i < k$, using $2i$ colors 1, 2, \ldots, $2i$ and having order $n_{2i} = (2t-2) \cdot 5^{i - 1}$ such that $G_{2i}$ contains no rainbow triangle and no monochromatic copy of $S_{t}^{+}$. Construct $G_{2i + 2}$ by making five copies of $G_{2i}$ and inserting edges of colors $2i + 1$ and $2i + 2$ between the copies to form a blow-up of the unique $2$-colored $K_{5}$ which contains no monochromatic triangle. This coloring clearly contains no rainbow triangle and, since there is no monochromatic triangle in either of the two new colors, there can be no monochromatic copy of $S_{t}^{+}$ in $G_{2i + 2}$. Ultimately, $G_{k}$ is a $k$-colored complete graph containing no rainbow triangle and no monochromatic copy of $S_{t}^{+}$ with $|G_{k}| = 2(t - 1) \cdot 5^{(k - 2)/2}$.
\end{proof}

At last, the main result of this section, the precise Gallai-Ramsey number for $S_{t}^{+}$.

\begin{theorem}\label{Thm:GR-StarPlus}
For $k \geq 1$ and $t\geq3$,
$$
gr_{k}(K_{3}:S_{t}^{+})=\begin{cases}
2(t-1)\cdot5^{\frac{k-2}{2}}+1 & \text{ if $k$ is even,}\\
(t-1)\cdot5^{\frac{k-1}{2}}+1 & \text{ if $k$ is odd.}
\end{cases}
$$
\end{theorem}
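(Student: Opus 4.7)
The plan is to prove the upper bound by strong induction on $k$; the matching lower bound is Lemma~\ref{Lem:GRSt+}. The base cases $k=1$ (where $K_t$ trivially contains $S_t^+$) and $k=2$ (Theorem~\ref{Thm:RamseySt+}) are immediate. Let $n_k$ denote the claimed upper bound and observe that $n_k-1=5(n_{k-2}-1)$ for $k\geq 3$, which explains the factor of $5$ in the inductive step.

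For $k\geq 3$, let $G$ be a Gallai $k$-coloring of $K_n$ with $n=n_k$, and assume for contradiction that $G$ contains no monochromatic $S_t^+$. Apply Theorem~\ref{Thm:G-Part} to obtain a Gallai partition $V_1,\dots,V_p$ with reduced graph $R$ in two colors, say red (color $1$) and blue (color $2$). Since $R(S_t^+,S_t^+)=2t-1$ by Theorem~\ref{Thm:RamseySt+}, necessarily $p\leq 2t-2$; otherwise a monochromatic $S_t^+$ in $R$ blows up to one in $G$.

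The crucial structural step exploits that $S_t^+$ is a star with one extra edge closing a triangle. For each part $V_i$, let $r_i$ and $b_i$ denote the total red- and blue-degree of $V_i$ across the partition. If $r_i\geq t-1$, I would argue (a)~that $V_i$ contains no internal red edge, since a red edge $vw$ in $V_i$ together with any $u\in V_j$ for $j$ a red-neighbor of $i$ in $R$ would form a red triangle while $v$ has at least $t-1$ red neighbors, and (b)~that the red neighbors of $i$ in $R$ form a blue clique, since a red edge between two such neighbors combined with any vertex of $V_i$ would produce the same obstruction. The symmetric statement holds with red and blue swapped. Consequently, a \emph{doubly large} part ($r_i,b_i\geq t-1$) uses only the remaining $k-2$ colors, so $|V_i|\leq n_{k-2}-1$ by induction; a \emph{singly large} part uses at most $k-1$ colors and so $|V_i|\leq n_{k-1}-1$; and at most one \emph{small} part ($r_i,b_i<t-1$) can exist, because two such parts would force $n<4(t-1)<n_k$.

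If every part is doubly large, then (b) applied at every $i$ forces $R$ to have no monochromatic triangle, so $p\leq R(3,3)-1=5$ and $n\leq 5(n_{k-2}-1)=n_k-1$, contradicting $n=n_k$. When some part is singly large or small, I would combine the size bounds above with the additional reduced-graph constraints from (b) and invoke Lemma~\ref{Lemma:SmallOutside}, taking the unique small part $V^*$ (if present) as $A$ and its red- and blue-neighbor aggregates as $B_1,B_2$ (each of size strictly less than $t-1$). The main obstacle will be this last case: if $V^*$ uses all $k$ colors internally, a direct application of Lemma~\ref{Lemma:SmallOutside} only yields $n\leq gr_k(K_3:S_t^+)-1$, which is not immediately contradictory; to resolve this I would group $V^*$ together with one or more doubly large parts into a single core using strictly fewer than $k$ colors, so that the lemma applies at a smaller index and the inductive hypothesis forces $n\leq n_{k-1}-1<n_k$.
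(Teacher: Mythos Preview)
Your induction scheme and the ``all doubly large'' subcase are correct and correspond to the paper's Case $r\geq 4$. But your classification of parts by their \emph{external red/blue degree} rather than by their \emph{own order} (which is what the paper uses) leaves a genuine gap in the remaining cases, and your proposed fix does not close it.

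Here is a concrete failure. Take $p=2$ with all edges between $V_1$ and $V_2$ red, $|V_2|\leq t-2$, and $|V_1|$ huge. Then $r_{V_1}=|V_2|<t-1$ and $b_{V_1}=0$, so $V_1$ is ``small'' in your sense and your framework produces no bound on $|V_1|$ whatsoever. Your remedy---apply Lemma~\ref{Lemma:SmallOutside} with $A=V^{*}=V_1$, then ``group $V^{*}$ with one or more doubly large parts into a core using strictly fewer than $k$ colors''---cannot work here: there are no doubly large parts to group with, and in any case if $V^{*}$ uses all $k$ colors internally then every set containing $V^{*}$ still uses all $k$ colors, so the lemma only ever returns the circular inequality $n\leq gr_k(K_3:S_t^{+})-1$. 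The paper's escape is exactly the implication your condition~(a) misses: since $|V_1|\geq t-1$, any vertex of $V_2$ already has $\geq t-1$ red neighbours inside $V_1$, so a red edge within $V_1$ would give a red $S_t^{+}$ centred in $V_2$; hence $V_1$ contains no red edges and $|V_1|\leq n_{k-1}-1$. You argue ``$r_i\geq t-1\Rightarrow$ no red in $V_i$'', but the dual ``$|V_i|\geq t-1$ and $V_i$ has a red neighbour $\Rightarrow$ no red in $V_i$'' is what carries the small-part cases.

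More broadly, the paper never invokes Lemma~\ref{Lemma:SmallOutside} for this theorem. Instead it fixes a Gallai partition with the \emph{minimum} number $m$ of parts, lets $r$ be the number of parts of order at least $t-1$, and does a case analysis on $r\in\{0,1,2,3,\geq 4\}$. Two short structural claims drive the small-$r$ cases: Claim~\ref{Clm:AllSmall} bounds by $2t-4$ the total size of any family of small parts pairwise joined in a single colour, and Claim~\ref{Clm:DifferentColors} limits how extra parts can attach to a pair of large parts. Minimality of $m$ is used repeatedly (e.g.\ to reduce $m\leq 3$ to $m=2$, and to guarantee both colours occur at each large part when $m\geq 4$), and your sketch does not appeal to it. Your handling of the ``some singly large, no small'' situation is also only gestured at; the paper's $r=1,2,3$ cases show that nontrivial bookkeeping is needed there as well.
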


\begin{proof}
The lower bound follows from Lemma~\ref{Lem:GRSt+}. We prove the upper bound by induction on $k$. The case $k=1$ is trivial and the case $k=2$ is precisely Theorem~\ref{Thm:RamseySt+}, so suppose $k\geq3$ and let $G$ be a Gallai coloring of $K_{n}$ where
$$
n=\begin{cases}
2(t-1)\cdot5^{\frac{k-2}{2}}+1 & \text{ if $k$ is even,}\\
(t-1)\cdot5^{\frac{k-1}{2}}+1 & \text{ if $k$ is odd.}
\end{cases}
$$

Since $G$ is a Gallai coloring, by Theorem~\ref{Thm:G-Part}, there is a Gallai partition of $G$. Suppose red and blue are the two colors appearing in the Gallai partition. Let $m$ be the number of parts in this partition and choose such a partition where $m$ is minimized. By Theorem~\ref{Thm:RamseySt+} applied on the reduced graph, we see that $m\leq2t-2$. Let $H_{1}, H_{2}, \dots, H_{m}$ be the parts of this partition and suppose $|H_{i}| \geq |H_{i + 1}|$ for all $i$ with $1 \leq i \leq m - 1$. Let $r$ be the number of parts of the Gallai partition with order at least $t-1$, so $|H_{r}|\geq t-1$ and $|H_{r+1}| \leq t-2$.

First suppose $2\leq m\leq3$. If $m\leq3$, then by the minimality of $m$, we may assume $m=2$, say with corresponding parts $H_{1}$ and $H_{2}$. Without loss of generality, suppose all edges between $H_{1}$ and $H_{2}$ are blue. Since $k\geq3$, we have $|G|=|H_{1}|+|H_{2}|\geq 5t-4$, so there is at least one part of order at least $t-1$, meaning that $|H_{1}|\geq t-1$. If $|H_{i}|\geq t-1$ for $i=1, 2$, then to avoid a blue $S_{t}^{+}$, there can be no blue edges within $H_{1}$ and $H_{2}$. Since a color is missing within each $H_{i}$, apply induction on $k$ within $H_{i}$. This means that
$$
|G|=|H_{1}|+|H_{2}|\leq 2[gr_{k-1}(K_{3}:S_{t}^{+})-1]<n,
$$
a contradiction. Otherwise if $|H_{2}| < t - 1$, then still there are no blue edges within $H_{1}$ so by induction on $k$ within $H_{1}$, we have
$$
|G|=|H_{1}|+|H_{2}|\leq [gr_{k-1}(K_{3}:S_{t}^{+})-1] + (t - 2) < n,
$$
a contradiction. We may therefore assume $m \geq 4$.

If $r\geq 4$ and $m\geq 6$, then any choice of $6$ parts containing the $4$ parts
$\mathcal{H}=\{H_{1}, H_{2}, H_{3}, H_{4}\}$ will contain a monochromatic triangle in the reduced graph. Such a triangle must contains at least one part from $\mathcal{H}$, meaning that the corresponding subgraph of $G$ must contain a monochromatic copy of $S_{t}^{+}$, a contradiction. Thus, we may assume either $4\leq m\leq5$ or $r\leq 3$. We consider cases based on the value of $r$. First a couple of claims that will be helpful in the later analysis.

\begin{claim}\label{Clm:AllSmall}
If there are several parts of a Gallai partition, each of order at most $t - 2$, such that all edges in between pairs of these parts are red, then there are at most a total of $2t - 4$ vertices in these parts.
\end{claim}

\begin{proof}
Let $H'_{1}, H'_{2}, \dots, H'_{m'}$ be these parts. If $m' \leq 2$, then $|H'_{1}\cup H'_{2}| \leq 2t-4$ by assumption. If $m' \geq 3$, to avoid creating a red $S_{t}^{+}$ using a vertex of $H'_{1}$ as the center of the star, we have $|H'_{2} \cup H'_{3} \cup \dots \cup H'_{m'}| \leq t-2$, so $|H'_{1}\cup H'_{2}\cup \dots \cup H'_{m'}| \leq 2t-4$, completing the proof.
\end{proof}

\begin{claim}\label{Clm:DifferentColors}
If $H_{1}$ and $H_{2}$ are two parts of a Gallai partition each with order at least $t - 1$, say with blue edges in between $H_{1}$ and $H_{2}$, then there is at most one part with blue edges to $H_{1}$ and with red edges to $H_{2}$, and similarly, there is at most one part with red edges to $H_{1}$ and with blue edges to $H_{2}$. If $c$ is the color of the edges between $H_{1}$ and $H_{2}$, then there are no parts of the Gallai partition with edges of color $c$ to both $H_{1}$ and $H_{2}$.
\end{claim}

\begin{proof}
For a contradiction, suppose there are two parts $H_{3}$ and $H_{4}$ with red edges to $H_{1}$ and blue edges to $H_{2}$. If the edges from $H_{3}$ to $H_{4}$ are red, then $H_{1} \cup H_{3} \cup H_{4}$ contains a red copy of $S_{t}^{+}$. Otherwise, if the edges from $H_{3}$ to $H_{4}$ are blue, then $H_{2} \cup H_{3} \cup H_{4}$ contains a blue copy of $S_{t}^{+}$, either case providing a contradiction. The proof is symmetric for two parts with red edges to $H_{2}$ and blue edges to $H_{1}$.

For the second conclusion, if there was a part $H_{3}$ with blue edges to both $H_{1}$ and $H_{2}$, then $H_{1} \cup H_{2} \cup H_{3}$ contains a blue copy of $S_{t}^{+}$, for a contradiction.
\end{proof}

\setcounter{case}{0}
\begin{case}
$r = 0$.
\end{case}

Consider the colors of the edges from $H_{1}$ to $H_{2} \cup H_{3} \cup \dots \cup H_{m}$. 
Let $A$ be the union of the parts with blue edges to $H_{1}$ and let $B$ be the union of the parts with red edges to $H_{1}$. If $|A|$ (or similarly $|B|$) is at least $t - 1$, then there can be no blue edges within $A$ (respectively red edges within $B$). Then all edges between the parts in $A$ (respectively $B$) must be red (respectively blue) so by Claim~\ref{Clm:AllSmall}, we have $|A| \leq 2t - 4$ and $|B| \leq 2t - 4$. This means
$$
|G|=|H_{1}|+|A|+|B|\leq 5(t-2)<n,
$$
a contradiction.

\begin{case}
$r=1$.
\end{case}

Again let $A$ be the union of the parts with blue edges to $H_{1}$ and let $B$ be the union of the parts with red edges to $H_{1}$. As in the previous case, we have $|A| \leq 2t - 4$ and $|B| \leq 2t - 4$. Since $m$ is minimal, neither $A$ nor $B$ can be empty. There can therefore be no red or blue edges within $H_{1}$, we have
\beqs
|G| & = & |H_{1}|+|A|+|B|\\
~ & \leq & [gr_{k - 2}(K_{3} : S_{t}^{+}) - 1] + 2(2t-4)\\
~ & < & n,
\eeqs
a contradiction. 

\begin{case}
$r=2$.
\end{case}

Suppose blue is the color of the edges between $H_{1}$ and $H_{2}$, so neither $H_{1}$ nor $H_{2}$ can contain blue edges and by Claim~\ref{Clm:DifferentColors}, there is no part with blue edges to both $H_{1}$ and $H_{2}$. By Claim~\ref{Clm:DifferentColors}, there is at most one part $H_{3}$ with blue edges to $H_{1}$ and with red edges to $H_{2}$, and there is at most one part $H_{4}$ with red edges to $H_{1}$ and with blue edges to $H_{2}$. Let $A$ be the union of the remaining parts, with all red edges to $H_{1}\cup H_{2}$. By Claim~\ref{Clm:AllSmall}, we have $|A|\leq 2t-4$. By the minimality of $m$, all parts have incident edges from other parts in both red and blue so both $H_{1}$ and $H_{2}$ contain no red edges or blue edges. This means $|H_{i}|\leq gr_{k-2}(K_{3}:S_{t}^{+})-1$, so
\begin{eqnarray*}
|G| & = & |H_{1}|+|H_{2}|+|H_{3}|+|H_{4}|+|A|\\
~ & \leq & 2[gr_{k-2}(K_{3}:S_{t}^{+})-1]+4(t-2)\\
 & < & n,
\end{eqnarray*}
a contradiction. 

\begin{case}
$r=3$.
\end{case}

To avoid a monochromatic copy of $S_{t}^{+}$, the triangle in the reduced graph corresponding to the parts $\{H_{1}, H_{2}, H_{3}\}$ must not be monochromatic. Without loss of generality, suppose the edges from $H_{2}$ to $H_{3}$ are red and all edges from $H_{1}$ to $H_{2} \cup H_{3}$ are blue. Then $H_{2}$ and $H_{3}$ contain neither red nor blue edges, and $H_{1}$ contains no blue edges.

First we claim that there is no part having blue edges to $H_{1}$. Otherwise suppose there is such a part, say $H'$ with blue edges to $H_{1}$. To avoid a blue copy of $S_{t}^{+}$, all edges from $H'$ to $H_{2}\cup H_{3}$ must be red, but then $H' \cup H_{2} \cup H_{3}$ contains a red copy of $S_{t}^{+}$, a contradiction. Thus, all edges from $H_{1}$ to $H_{4}\cup \ldots \cup H_{m}$ must be red. Since $m \geq 4$, this means $H_{1}$ also contains no red edges so $|H_{i}|\leq gr_{k-2}(K_{3}:S_{t}^{+})-1$ for $1\leq i \leq3$.

By Claim~\ref{Clm:DifferentColors},  there is at most one part with blue edges to $H_{2}$ and red edges to $H_{3}$, say $H_{4}$, and there is at most one part with red edges to $H_{2}$ and blue edges to $H_{3}$, say $H_{5}$. Also by Claim~\ref{Clm:DifferentColors}, there is at most one part with blue edges to $H_{2}\cup H_{3}$, say $H_{6}$. Note that $H_{4} \cup H_{5} \cup H_{6} \neq \emptyset$ by the minimality of $m$. This covering all the possibilities, we get 
\begin{eqnarray*}
|G| & = & \sum_{i=1}^{6}|H_{i}|\\
~ & \leq & 3[gr_{k-2}(K_{3}:S_{t}^{+})-1]+3(t-2)\\
&<&n,
\end{eqnarray*}
a contradiction. 

\begin{case}
$r\geq 4$.
\end{case}

As observed previously, this implies that $4\leq m \leq5$. Within the subgraph of the reduced graph induced on the $r$ parts of order at least $t - 1$, there can be no monochromatic triangle. If $r=5$, there is only one coloring of $K_{5}$ with no monochromatic triangle and if $r=4$, there are two colorings of $K_{4}$ with no monochromatic triangle. In each of these colorings, every vertex has at least one incident edge in both colors, meaning that all of the $r$ corresponding parts of order at least $t-1$ must have no red or blue edges. Then
\begin{eqnarray*}
|G| & = & \sum_{i=1}^{m}|H_{i}|\\
~ & \leq & 5[gr_{k-2}(K_{3}:S_{t}^{+})-1]\\
&<&n,
\end{eqnarray*}
a contradiction, completing the proof of this case and the proof of Theorem~\ref{Thm:GR-StarPlus}.
\end{proof}

\section{Path with a triangle end}\label{Section:PathPlus}

Let $P_{t}$ denote the path of order $t$. Then for $t\geq3$, let $P_{t}^{+}$ denote the graph consisting of the path $P_{t}$ with the addition of an edge between one end and the vertex at distance 2 along the path from that end, forming a triangle. Note that $P_{3}^{+}=K_{3}$ and $P_{4}^{+}=S_{4}^{+}$.

Before beginning the study of the Gallai-Ramsey number, we first establish the $2$-color Ramsey number for $P_{t}^{+}$.

\begin{theorem}\label{Thm:RamseyPt+}
For $t \geq 4$,
$$
R(P_{t}^{+}, P_{t}^{+}) = 2t - 1.
$$
\end{theorem}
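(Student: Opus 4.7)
For the lower bound, I would use the same construction as in Theorem~\ref{Thm:RamseySt+}: take two disjoint copies of $K_{t-1}$, color every edge inside each copy red, and every edge between the two copies blue. Each red component has only $t-1<t$ vertices, so it cannot contain $P_t^+$, and the blue subgraph is the bipartite graph $K_{t-1,t-1}$, which is triangle-free and so contains no $P_t^+$ either. This gives $R(P_t^+,P_t^+)\geq 2t-1$.

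For the upper bound, my plan is to induct on $t$. The base case $t=4$ follows immediately from Theorem~\ref{Thm:RamseySt+} since $P_4^+=S_4^+$. For the inductive step, suppose $t\geq 5$ and $R(P_{t-1}^+,P_{t-1}^+)=2t-3$. Take an arbitrary $2$-coloring of $K_{2t-1}$; deleting any two vertices leaves a $K_{2t-3}$ which, by induction, contains a monochromatic $P_{t-1}^+$. Without loss of generality it is red, on vertices $v_1,v_2,\ldots,v_{t-1}$, with triangle $v_1v_2v_3$ and pendant path $v_3v_4\cdots v_{t-1}$. If any of the $t$ outside vertices is red-adjacent to the pendant endpoint $v_{t-1}$, I append it to obtain a red $P_t^+$. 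Otherwise all $t$ outside vertices form a blue neighborhood $S$ of $v_{t-1}$, so $|S|\geq t$. If $S$ contains no blue edge, then $S$ induces a red $K_{|S|}$ with $|S|\geq t$, which already contains a red $P_t^+$. Otherwise some blue edge $uu'\subseteq S$, together with $v_{t-1}$, forms a blue triangle.

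The main obstacle is then extending this blue triangle $v_{t-1}uu'$ to a blue $P_t^+$: I need a blue path of length $t-3$ starting at one of $\{v_{t-1},u,u'\}$ and using $t-3$ further vertices disjoint from the other two triangle vertices. The vertex $v_{t-1}$ has at least $t-2$ other blue neighbors in $S\setminus\{u,u'\}$, giving many candidates for the first step, but reaching length $t-3$ requires more structural input. I plan to obtain this by (i) refining the choice of the original $P_{t-1}^+$ so that $v_{t-1}$ has maximum blue degree over all such choices, forcing an even richer blue structure when the red extension fails, and (ii) invoking the non-skeleton edges inside $\{v_1,\ldots,v_{t-2}\}$: every such edge is either red (potentially creating a new red triangle that can be extended into a red $P_t^+$) or blue (directly usable to extend the blue path). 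If the inductive route gets bogged down, a fallback is a direct argument in the style of Theorem~\ref{Thm:RamseySt+} based on a vertex of maximum monochromatic degree; the delicate regime there is when both color classes are $(t-1)$-regular on $2t-1$ vertices, where parity forces $t$ odd and the rigid regular structure can be leveraged to locate the required path.
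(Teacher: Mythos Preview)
Your lower bound construction matches the paper's exactly.

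For the upper bound, your inductive plan has a genuine gap at the extension step. After locating the blue triangle $v_{t-1}uu'$, you need a blue path on $t-3$ further vertices attached at one triangle vertex, and nothing you have established guarantees this. Concretely: you know $v_{t-1}$ has all-blue edges into $S$ with $|S|\ge t$, and that $S$ contains at least one blue edge $uu'$ --- but that configuration is only a blue $S_{t+1}^{+}$ centered at $v_{t-1}$, and $S_{t+1}^{+}$ does not contain $P_t^{+}$ once $t\ge 5$ (its longest path has four vertices). Your proposed remedies --- choosing $v_{t-1}$ to maximise blue degree, or mining the non-skeleton edges $v_iv_j$ inside the red $P_{t-1}^{+}$ --- are heuristics without an argument; in particular, a blue interior edge gives you one more blue edge somewhere, not a blue path of length $t-3$, and a red interior edge need not create a new red triangle that extends to a red $P_t^{+}$ disjointly from the obstacle. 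The fallback via a vertex of maximum monochromatic degree has the same defect: large monochromatic degree yields a star, not a path, and converting $S_t^{+}$-type structure into $P_t^{+}$-type structure is exactly the content of the theorem you are trying to prove.

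The paper's proof takes a completely different route and avoids induction on $t$. It invokes the known cycle Ramsey numbers (Theorem~\ref{Thm:RamseyCycles}) to obtain a monochromatic cycle of length $t$ (for $t$ odd) or $t+2$ (for $t$ even and $t\ge 10$, with separate ad hoc arguments for $t\in\{6,8\}$). On such a red cycle $v_1v_2\cdots$, each chord $v_iv_{i+2}$ must be blue to avoid an immediate red $P_t^{+}$, and these chords assemble into a blue cycle (or two blue cycles) of comparable length; a short analysis of the edges from an outside vertex then forces a monochromatic $P_t^{+}$. The decisive structural input is a long monochromatic \emph{cycle}, which already carries the path you need --- precisely what your star-based inductive scheme is missing.
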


Note that if $t = 3$, then $P_{3}^{+} = K_{3}$ so $R(P_{3}^{+}, P_{3}^{+}) = 6$.

\begin{proof}
The lower bound follows from the graph constructed by taking two copies of $K_{t - 1}$ each colored entirely with red and adding all edges in blue in between. Each red component is too small to contain a monochromatic copy of $P_{t}^{+}$ and the blue subgraph is bipartite so cannot contain a copy of $P_{t}^{+}$. The order of this constructed graph is $2t-2$, meaning that the Ramsey number is at least $2t-1$.

If $t = 4$, then $P_{4}^{+} = S_{4}^{+}$ so $R(P_{4}^{+}, P_{4}^{+}) = 7$ by Theorem~\ref{Thm:GR-StarPlus}, so suppose $t \geq 5$.
For the upper bound in general, let $G$ be a $2$-coloring of $K_{2t - 1}$, say using red and blue.

First suppose $t$ is odd, so by Theorem~\ref{Thm:RamseyCycles}, there is a monochromatic copy of $C_{t}$ in $G$, say with $C$ being a red copy of $C_{t}$. If we let $C = v_{1}v_{2}\cdots v_{t}v_{1}$, then to avoid creating a red copy of $P_{t}^{+}$, all edges of the form $v_{i}v_{i + 2}$ must be blue, where indices are taken modulo $t$. Since $t$ is odd, these edges form a blue copy of $C_{t}$. Let $v$ be an arbitrary vertex in $G \setminus C$. Without loss of generality, suppose $vv_{2}$ is red. Then to avoid creating a red copy of $P_{t}^{+}$, the edges $vv_{1}$ and $vv_{3}$ must be blue. Then $vv_{1}v_{3}$ forms a blue triangle and, along with the rest of the blue cycle, this structure contains the desired blue copy of $P_{t}^{+}$.

Next suppose $t$ is even and additionally we first suppose $t \geq 10$. 
Then by Theorem~\ref{Thm:RamseyCycles}, there is a monochromatic copy of $C_{t + 2}$ in $G$, say with $C$ being a red copy of $C_{t + 2}$. We again let $C = v_{1}v_{2}\cdots v_{t + 2}v_{1}$ and note that, as in the previous case, every edge of the form $v_{i}v_{i + 2}$ must be blue where indices are taken modulo $t + 2$. Since $t$ is even, these blue edges induce two copies of $C_{(t + 2)/2}$.

If no vertex in $G \setminus V(C)$ has red edges to $C$, then it is easy to construct a blue copy of $P_{t}^{+}$ so let $v \in G \setminus V(C)$ with $v$ having a red edge to $C$, say to $v_{2}$. Then both edges $vv_{1}$ and $vv_{3}$ must be blue to avoid a red copy of $P_{t}^{+}$. Let $C_{even}$ (and $C_{odd}$) be the two blue cycles on the even (respectively odd) indexed vertices. Since these edges form a blue triangle with the edge $v_{1}v_{3}$, this restricts the blue edges that can go between the two blue cycles. In fact, all edges from $\{v_{t + 1}, v_{5}\}$ to $C_{even}$ must be red. To avoid a red $P_{t}^{+}$, the edges $vv_{5}$ and $vv_{t + 1}$ must also be blue so, as above, $v_{1}$ and $v_{3}$ must also have all red edges to $C_{even}$. In order to avoid a red copy of $P_{t}^{+}$, these red edges imply that all edges between pairs of even indexed vertices must be blue, inducing a blue complete graph of order $\frac{t + 2}{2}$. To avoid a blue copy of $P_{t}^{+}$, all edges between $C_{even}$ and $C_{odd}$ must be red, in turn meaning that all edges between pairs of odd indexed vertices must be blue. Let $A$ and $B$ be the two blue cliques. Each vertex in $G \setminus V(C)$ can have red edges to only one of $A$ or $B$, say $A$. This means that each such vertex must have all blue edges to the opposite clique $B$, which in turn means that it must have all red edges to $A$. Putting all of this together, we see that the red subgraph is a complete bipartite graph. With $|G| = 2t-1$, one part must have order at least $t$ and induce a blue complete graph, containing the desired copy of $P_{t}^{+}$.

If $t = 6$, then $|G| = 11$. By Theorem~\ref{Thm:RamseyCycles}, there is a monochromatic copy of $C_{6}$, say $C$ in red with vertices $v_{1}, v_{2}, \dots, v_{6}$ in this order. As above, every edge of the form $v_{i}v_{i + 2}$ must be blue where indices are taken modulo $t$. These blue edges induce two blue triangles. To avoid creating a blue copy of $P_{6}^{+}$, all edges between these two triangles must be red. To avoid creating a red copy of $P_{6}^{+}$, no vertex in $G \setminus C$ can have at least one red edge to both blue triangles, meaning that every vertex has all blue edges to at least one of the two triangles. Since there are $5$ vertices in $G \setminus C$, at least three of them must have all blue edges to a single blue triangle, say vertices $x, y, z$ have all blue edges to the blue triangle $v_{2}v_{4}v_{6}$. Then the graph induced on $\{x, y, z, v_{2}, v_{4}, v_{6}\}$ contains a blue copy of $P_{6}^{+}$, for a contradiction.

If $t = 8$, then $|G| = 15$. By Theorem~\ref{Thm:RamseyCycles}, there is a monochromatic copy of $C_{8}$, say $C$ in red with vertices $v_{1}, v_{2}, \dots, v_{8}$ in this order. As above, every edge of the form $v_{i}v_{i + 2}$ must be blue where indices are taken modulo $t$. These blue edges induce two blue copies of $C_{4}$ with vertices $v_{1}v_{3}v_{5}v_{7}$ and $v_{2}v_{4}v_{6}v_{8}$ respectively.

First, suppose there is a red edge within each of these copies of $C_{4}$, say without loss of generality, that $v_{1}v_{5}$ and $v_{2}v_{6}$ are red. Then to avoid creating a red copy of $P_{8}^{+}$, the edges $v_{1}v_{4}, v_{2}v_{7}, v_{3}v_{6}$, and $v_{5}v_{8}$ must be blue. These edges together with the two copies of $C_{4}$ form a blue cube $P_{2} \times P_{2} \times P_{2}$. If any face of this cube contains a blue edge, say for example the edge $v_{1}v_{6}$, then there would be a blue copy of $P_{8}^{+}$ with path vertices $v_{1}v_{3}v_{6}v_{4}v_{2}v_{8}v_{7}v_{7}$ and extra edge $v_{1}v_{6}$. Thus, every face of this cube contains only red edges. Let $v$ be an arbitrary vertex in $G \setminus C$. If $v$ has a blue edge to any vertex of $C$, without loss of generality say $v_{1}$, then to avoid creating a blue copy of $P_{8}^{+}$, all edges from $v$ to $\{v_{3}, v_{4}, v_{7}\}$ must be red. This makes a red copy of $P_{8}^{+}$ with path vertices $vv_{3}v_{7}v_{8}v_{1}v_{2}v_{6}v_{5}$ and extra edge $v_{3}v_{7}$. Thus, every vertex in $G \setminus C$ must have only red edges to $C$ but this again creates the same red $P_{8}^{+}$, a contradiction.

Finally, we may assume there is no red edge within one of the copies of $C_{4}$, say without loss of generality, that $A = \{v_{1}, v_{3}, v_{5}, v_{7}\}$ induces a blue copy of $K_{4}$. Any blue edge from $A$ to $B = \{v_{2}, v_{4}, v_{6}, v_{8}\}$ would create a blue copy of $P_{8}^{+}$ so all such edges must be red. This, in turn, means that $B$ also induces a blue copy of $K_{4}$. In order to avoid a red copy of $P_{8}^{+}$, no vertex of $G \setminus C$ can have red edges to both $A$ and $B$, so each vertex in $G \setminus C$ must have all blue edges to at least one of $A$ or $B$. Since there are $7$ vertices in $G \setminus C$, there must be at least $4$ of them with all blue edges to the same set, say $A$. The blue graph induced on these vertices along with $A$ easily contains a blue copy of $P_{8}^{+}$, a contradiction completing the proof.
\end{proof}

In fact, the same proof yields a slightly more general result.

\begin{corollary}\label{Cor:GenRamseyPt+}
For $t \geq s \geq 4$,
$$
R(P_{s}^{+}, P_{t}^{+}) = 2t - 1.
$$
\end{corollary}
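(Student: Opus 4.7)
The plan is to derive Corollary~\ref{Cor:GenRamseyPt+} as a short consequence of Theorem~\ref{Thm:RamseyPt+}, handling the upper bound by subgraph monotonicity and the lower bound by a minor tweak of the sharpness construction used in that theorem.

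For the upper bound, the key observation is that whenever $s \leq t$, the graph $P_s^+$ is a subgraph of $P_t^+$: sending the vertices of $P_s^+$ in order to the first $s$ vertices of the path in $P_t^+$ maps each path edge $u_iu_{i+1}$ to $v_iv_{i+1}$ and the triangle-inducing chord $u_1u_3$ to $v_1v_3$. Hence any monochromatic copy of $P_t^+$ already contains a monochromatic copy of $P_s^+$, and Ramsey-number monotonicity gives
\[
R(P_s^+, P_t^+) \leq R(P_t^+, P_t^+) = 2t-1,
\]
by Theorem~\ref{Thm:RamseyPt+}. Equivalently, as the corollary's phrasing suggests, one may simply rerun the proof of Theorem~\ref{Thm:RamseyPt+} verbatim: every red $P_t^+$ that was produced in that argument arose as a red triangle together with a red path of length at least $t-3$ emanating from a triangle vertex, and truncating such a path to length $s-3$ yields a red $P_s^+$ in the same location.

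For the lower bound, the plan is to swap the roles of the two colors in the construction used in Theorem~\ref{Thm:RamseyPt+}. Partition $2t-2$ vertices into two sets $A$ and $B$ of size $t-1$ each, color all edges inside $A$ and all edges inside $B$ blue, and color all edges between $A$ and $B$ red. The blue subgraph is the disjoint union of two copies of $K_{t-1}$, each of which has only $t-1$ vertices and therefore cannot contain $P_t^+$. The red subgraph is the bipartite graph $K_{t-1,t-1}$, which is triangle-free and hence cannot contain $P_s^+$ (since $P_s^+$ has a triangle). This shows $R(P_s^+, P_t^+) \geq 2t-1$.

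I do not anticipate any real obstacle: the upper bound is a two-line consequence of Theorem~\ref{Thm:RamseyPt+} via the subgraph inclusion $P_s^+ \subseteq P_t^+$, and the lower bound is a one-paragraph construction obtained by color-swapping the existing sharpness example.
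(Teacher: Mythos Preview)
Your argument is correct and aligns with the paper's own treatment, which simply asserts that ``the same proof yields'' the corollary without further detail. Your monotonicity derivation of the upper bound via $P_s^+\subseteq P_t^+$ is in fact cleaner than literally re-running the case analysis of Theorem~\ref{Thm:RamseyPt+}. One point worth highlighting: your color swap in the lower-bound construction is genuinely necessary and not merely cosmetic. With the paper's original orientation (red inside the two cliques, blue across), each red $K_{t-1}$ would contain a red $P_s^+$ whenever $s\le t-1$, so the construction would fail for $s<t$; placing blue inside and red across, as you do, avoids this. The paper's one-line remark glosses over this asymmetry, so your proposal is, if anything, more careful than the source.
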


We now begin the discussion of the Gallai-Ramsey number for $P_{t}^{+}$ by stating the lower bound. Indeed, the same construction as used in the proof of Lemma~\ref{Lem:GRSt+} also contains no monochromatic copy of $P_{t}^{+}$ so this result is an immediate corollary.

\begin{lemma}\label{Lem:PathPlusLowBd}
For $t \geq 4$ and $k \geq 1$,
$$
gr_{k}(K_{3}:P_{t}^{+})\geq\begin{cases}
2(t-1)\cdot5^{\frac{k-2}{2}}+1 & \text{ if $k$ is even,}\\
(t-1)\cdot5^{\frac{k-1}{2}}+1 & \text{ if $k$ is odd.}
\end{cases}
$$
\end{lemma}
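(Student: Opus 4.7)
The plan is to reuse verbatim the construction from the proof of Lemma~\ref{Lem:GRSt+} and verify that it avoids a monochromatic copy of $P_{t}^{+}$; the argument that no rainbow triangle appears is unchanged from that proof. Recall that the construction builds $G_{j+2}$ from $G_{j}$ by making five copies and inserting two new colors between the copies according to the unique triangle-free $2$-coloring of $K_{5}$ (two copies of $C_{5}$).

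First I would handle the base cases. For $k$ odd, the base $G_{1}$ is a single $K_{t-1}$, which has fewer than $t$ vertices and so cannot contain $P_{t}$ at all. For $k$ even, the base $G_{2}$ is the lower-bound construction from Theorem~\ref{Thm:RamseyPt+}: two disjoint red copies of $K_{t-1}$ joined by a complete blue bipartite graph. In red, each component has only $t-1$ vertices, so no red $P_{t}^{+}$ fits. In blue, the graph is bipartite and hence triangle-free, so it cannot contain the triangle that is a subgraph of $P_{t}^{+}$.

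Next I would perform the inductive step, assuming $G_{j}$ contains no monochromatic $P_{t}^{+}$ and deriving the same for $G_{j+2}$. Any monochromatic $P_{t}^{+}$ in one of the two newly introduced colors would contain a monochromatic triangle, but the new colors appear only along the edges of a blown-up $C_{5}$ and each such color class is triangle-free; hence no such $P_{t}^{+}$ can exist. Any monochromatic $P_{t}^{+}$ in an older color would have to lie entirely inside a single blow-up block, since every edge leaving a block uses one of the two new colors, and by the inductive hypothesis no such block contains a monochromatic $P_{t}^{+}$.

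The only structural facts about $P_{t}^{+}$ used in this verification are that it has exactly $t$ vertices and that it contains a triangle. These are precisely the two features that make the construction work for $S_{t}^{+}$, which is why there is no real obstacle here: the argument is a routine inheritance from Lemma~\ref{Lem:GRSt+}, and the result is indeed an immediate corollary.
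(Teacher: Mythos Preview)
Your proposal is correct and follows exactly the approach the paper takes: the paper simply states that the construction from Lemma~\ref{Lem:GRSt+} also contains no monochromatic copy of $P_{t}^{+}$ and declares the result an immediate corollary. You have supplied the routine verification that the paper omits, and all the details are sound.
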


\begin{theorem}
For $t \geq 4$ and $k \geq 1$,
$$
gr_{k}(K_{3}:P_{t}^{+})=\begin{cases}
2(t-1)\cdot5^{\frac{k-2}{2}}+1 & \text{ if $k$ is even,}\\
(t-1)\cdot5^{\frac{k-1}{2}}+1 & \text{ if $k$ is odd.}
\end{cases}
$$
\end{theorem}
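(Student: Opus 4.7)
The lower bound is immediate from Lemma~\ref{Lem:PathPlusLowBd}, so it remains to establish the upper bound. The plan is to mirror the proof of Theorem~\ref{Thm:GR-StarPlus} and induct on $k$. The base cases $k=1$ (trivial) and $k=2$ (Theorem~\ref{Thm:RamseyPt+}) are immediate, so for $k\ge 3$ let $G$ be a Gallai coloring of $K_{n}$ with $n$ as in the statement. Apply Theorem~\ref{Thm:G-Part} to obtain a Gallai partition minimizing the number of parts $m$, with reduced colors red and blue. Since the reduced graph is a two-colored complete graph on $m$ vertices containing neither a red nor a blue $P_{t}^{+}$, Theorem~\ref{Thm:RamseyPt+} forces $m\le 2t-2$. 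Order the parts $H_{1},\dots,H_{m}$ in non-increasing size and let $r$ be the number of parts of size at least $t-1$.

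The first step is to verify, in the $P_{t}^{+}$ setting, the analogues of Claim~\ref{Clm:AllSmall} and Claim~\ref{Clm:DifferentColors}. The analogue of Claim~\ref{Clm:AllSmall} asserts that a collection of parts of size at most $t-2$ joined pairwise in red contributes at most $2t-4$ vertices: the $m'\le 2$ case is immediate since the union is bipartite in red, while for $m'\ge 3$ any red complete multipartite subgraph with at least three parts (each of size $\le t-2$) whose total order is at least $2t-3$ contains a red $P_{t}^{+}$ by choosing a triangle with one vertex in each of three distinct parts and then extending the tail through the parts in alternating fashion. The analogue of Claim~\ref{Clm:DifferentColors} asserts that whenever $|H_{1}|,|H_{2}|\ge t-1$ with blue edges between them, no part has blue edges to both $H_{1}$ and $H_{2}$, and at most one part has blue edges to $H_{1}$ and red edges to $H_{2}$: in each case, the forbidden configuration creates a monochromatic complete tripartite graph with two large sides, and the same alternating construction gives the desired monochromatic $P_{t}^{+}$.

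With these claims in hand, the case split on $r\in\{0,1,2,3,\ge 4\}$ runs essentially verbatim from the Star+ proof. The union of parts lying in each color class relative to $H_{1}$ (or $H_{1}\cup H_{2}$, or $H_{1}\cup H_{2}\cup H_{3}$) is bounded using the AllSmall analogue, the number of large parts is controlled using the DifferentColors analogue, and each large part, being a part of a minimally chosen Gallai partition, must be missing at least two colors and therefore has size at most $gr_{k-2}(K_{3}:P_{t}^{+})-1$ by the inductive hypothesis; one may invoke Lemma~\ref{Lemma:SmallOutside} where convenient. Summing the contributions yields a total strictly smaller than $n$ in every case, providing the required contradiction.

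The main obstacle should be the DifferentColors analogue in its worst configuration, where the two small mixed-colored parts are both singletons: $P_{t}^{+}$ (unlike $S_{t}^{+}$) is not contained in a complete tripartite graph with two singleton sides once $t\ge 5$, so one cannot simply form a monochromatic triangle from three singletons and extend along its tail. I would handle this by exploiting the large part opposite the mixed-colored side to furnish the long tail, starting the $P_{t}^{+}$ at a singleton and then alternating between the two large sides of the tripartite structure. Because Theorem~\ref{Thm:RamseyPt+} gives the same two-color threshold $2t-1$ as in the star case, the arithmetic in each subcase matches that of the Star+ proof exactly, yielding the matching upper bound.
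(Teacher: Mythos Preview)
Your plan to mirror the $S_{t}^{+}$ proof verbatim has a genuine gap: the step ``each large part must be missing at least two colors, hence has size at most $gr_{k-2}(K_{3}:P_{t}^{+})-1$'' does not transfer from $S_{t}^{+}$ to $P_{t}^{+}$. In the star case, a single external vertex $v$ with (say) red edges to a large part $H_{1}$, together with one red edge inside $H_{1}$, already produces a red $S_{t}^{+}$ centered at $v$. For $P_{t}^{+}$ this fails: if the red neighborhood of $H_{1}$ outside is a singleton $v$, then a red edge inside $H_{1}$ only gives a red triangle through $v$, and there is no way to attach a red tail of length $t-3$, because every red edge you know about is incident with $v$. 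So from minimality of $m$ you cannot conclude that $H_{1}$ is red-free, and the inductive bound $|H_{1}|\le gr_{k-2}-1$ is unjustified. The same breakdown hits your Case $r=0$: the implication ``$|A|\ge t-1$ $\Rightarrow$ no blue edges between parts of $A$'' used a blue $S_{t}^{+}$ centered at a vertex of $H_{1}$; for $P_{t}^{+}$ with $|H_{1}|$ small there is no blue path to serve as the tail, so you cannot feed the situation into your AllSmall analogue and the bound $|A|\le 2t-4$ is unproven.

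The paper's proof avoids exactly this obstruction by changing the notion of ``large'': it takes the threshold to be $\lceil t/2\rceil$ rather than $t-1$, so that any two large parts joined in one color already furnish a monochromatic path on $t$ vertices between them, and a single extra edge of that color inside either part completes a $P_{t}^{+}$. To make the induction close with this smaller threshold, the paper first peels off a maximal set $T=\bigcup_i T_i$ of vertices that see $G\setminus T$ in a single color (bounding $|T_i|\le\lfloor t/2\rfloor-1$ via Claim~\ref{Clm:TisSmall}), and then runs the case analysis on $r\in\{0,\dots,5\}$ inside $G\setminus T$, invoking Lemma~\ref{Lemma:SmallOutside} to absorb $T$. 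The case $r=0$ is handled not by an AllSmall-type count but by appealing to the even-cycle Gallai--Ramsey bound (Theorem~\ref{Thm:GR-EvenCycles}) to find a long monochromatic even cycle, which is then extended to $P_{t}^{+}$ using a monochromatic triangle in the reduced graph. None of these ingredients appears in your sketch, and at least one of them (or an equivalent device) is needed to close the gap above.
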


\begin{proof}
The case $k = 1$ is trivial. From Theorem~\ref{Thm:RamseyPt+}, we have $R(P_{t}^{+}, P_{t}^{+}) = 2t - 1$, and hence the result is true for $k=2$. We therefore suppose $k\geq 3$ and let $G$ be a coloring of $K_n$ where
$$
n = n_{k} =\begin{cases}
2(t-1)\cdot5^{\frac{k-2}{2}}+1 & \text{ if $k$ is even,}\\
(t-1)\cdot5^{\frac{k-1}{2}}+1 & \text{ if $k$ is odd.}
\end{cases}
$$

Let $T$ be a maximal set of vertices $T = T_{1} \cup T_{2} \cup \dots \cup T_{k}$ where each subset $T_{i}$ has all incident edges to $G \setminus T$ in color $i$ and $|G \setminus T| \geq \ceil{\frac{t}{2}}$ constructed iteratively by adding at most $2\flr{\frac{t}{2}}$ vertices to $T$ at a time, with at most $\flr{\frac{t}{2}}$ vertices being added to each of two sets $T_{i}$ at a time. We first claim that each $|T_{i}|$ is small.

\begin{claim}\label{Clm:TisSmall}
For each $i$ with $1 \leq i \leq k$, we have $|T_{i}| \leq \flr{\frac{t}{2}} - 1$ and furthermore, $T_{i} = \emptyset$ for some $i$.
\end{claim}

Note that this implies that $|T| \leq (k - 1)\left(\flr{\frac{t}{2}} - 1\right)$. The proof of Claim~\ref{Clm:TisSmall} is similar to the corresponding proof in \cite{OddCycles}.

\begin{proof}
By the iterative definition of $T$, we may assume that this is the first step in the iterative construction where the set $T$ violates either of these assumptions. That is, assume that $|T_{i}| \leq 2\flr{\frac{t}{2}} - 1$ for all $i$ and either
\bi
\item at most two sets $T_{i}$ and $T_{j}$ have $|T_{i}|, |T_{j}| > \flr{\frac{t}{2}} - 1$, or
\item no set is empty and at most one set $T_{i}$ has $|T_{i}| > \flr{\frac{t}{2}} - 1$.
\ei
In either case, we certainly have $|T| \leq (k + 1)\flr{\frac{t}{2}}$.

We first show that $T_{i} = \emptyset$ for some $i$ so suppose the latter item above. If $k \geq 4$, then
$$
|G\setminus T| \geq n - (k + 1) \flr{\frac{t}{2}} \geq \left( \frac{t - 1}{2} - 1 \right) k + 3\left( \frac{t - 1}{2} \right).
$$
By Theorem~\ref{Thm:GR-EvenCycles}, there is a monochromatic even cycle of length at least $t - 1$ contained within $G \setminus T$. Since $T_{i} \neq \emptyset$ for all $i$, this cycle can easily be extended to a monochromatic copy of $P_{t}^{+}$, for a contradiction. 

Next we show that $|T_{i}| \leq \flr{\frac{t}{2}} - 1$ for all $i$. Thus, suppose there are at most two sets, $T_{i}$ and $T_{j}$, with $\flr{\frac{t}{2}} \leq |T_{i}| \leq t$ and possibly $\flr{\frac{t}{2}} \leq |T_{j}| \leq t$ (noting that one of these sets, say $T_{j}$, may not be large). Any edge of color $i$ (or possibly $j$) within $G \setminus T$ would produce a monochromatic copy of $P_{t}^{+}$ so $G\setminus T$ contains no edge of color $i$ (or possibly $j$). Then by Lemma~\ref{Lemma:SmallOutside}, we have $|G \setminus (T_{i} \cup T_{j})| \leq n_{k - 1} - 1$ so
\beqs
n & = & |T_{i}| + |G \setminus (T_{i} \cup T_{j})|\\
~ & \leq & 2t + n_{k - 1} - 1\\
~ & < & n_{k},
\eeqs
a contradiction.
\end{proof}

Let $G' = G \setminus T$. Since $G'$ is a Gallai coloring, it follows from Theorem~\ref{Thm:G-Part} that there is a Gallai partition of $V(G')$. Suppose that the two colors appearing in the Gallai partition are red and blue. Let $m$ be the number of parts in this partition and choose such a partition where $m$ is minimized. Let $H_{1}, H_{2}, \dots, H_{m}$ be the parts of this partition, say with $|H_{1}| \geq |H_{2}| \geq \dots \geq |H_{m}|$. When the context is clear, we also abuse notation and let $H_{i}$ denote the vertex of the reduced graph corresponding to the part $H_{i}$.

If $2\leq m\leq 3$, then by the minimality of $m$, we may assume $m=2$. Let $H_1$ and $H_2$ be the corresponding parts. Suppose all edges from $H_1$ to $H_2$ are red. If $|H_1|\geq \lceil t/2\rceil$ and $|H_2|\geq \lceil t/2\rceil$, then to avoid creating a red copy of $P_{t}^{+}$, there is no red edge in each $H_i$ with $i=1,2$ and the corresponding subset of $T$ is also empty, say $T_{1} = \emptyset$. This means that, by Lemma~\ref{Lemma:SmallOutside}, we have
\beqs
|G| & = & |T| + |H_{1}| + |H_{2}| \\
~ & \leq & |H_{1} \cup T_{2}| + |H_{2} \cup T_{3} \cup T_{4} \cup \dots \cup T_{k}|\\
~ & \leq & 2(n_{k - 1} - 1)\\
~ & < & n,
\eeqs
a contradiction. If $|H_1|\leq \lceil t/2\rceil-1$ and $|H_2|\leq \lceil t/2\rceil-1$, then 
\beqs
|G| & = & |T| + |H_{1}| + |H_{2}|\\
~ & \leq & \frac{k + 1}{2} t \\
~ & < & n_{k},
\eeqs
a contradiction. We may therefore assume that $|H_1|\geq \lceil t/2\rceil$ and $|H_2|\leq \lceil t/2\rceil-1$. Adding $|H_{2}|$ to $T$ contradicts the maximality of $T$ and completes the proof when $2 \leq m \leq 3$.

From now on, we assume $m\geq 4$. Let $r$ be the number of parts of the Gallai partition with order at least $\lceil t/2\rceil$ and call these parts ``large'' while other parts are called ``small''. Then $|H_r|\geq \lceil t/2\rceil$ and $|H_{r + 1}|\leq \lceil t/2\rceil-1$. To avoid a monochromatic copy of $P_{t}^{+}$, there can be no monochromatic triangle among these $r$ large parts, leading to the following immediate fact.
\begin{fact}\label{Fact:Pathr5}
$r\leq 5$.
\end{fact}

The remainder of the proof is broken into cases based on the value of $r$.

\setcounter{case}{0}
\begin{case}\label{Case:Pathr5}
$r=5$.
\end{case}

In this case, $t=5$ since otherwise any monochromatic triangle in the reduced graph restricted to $H_{1}, H_{2}, \dots, H_{6}$ would yield a monochromatic copy of $P_{t}^{+}$. To avoid the same construction, the reduced graph on the parts $H_1,H_2,H_3,H_4,H_5$ must be the unique $2$-coloring of $K_{5}$ with no monochromatic triangle with two complementary monochromatic cycles with in red and blue respectively. To avoid creating a monochromatic copy of $P_{t}^{+}$, 
for each $i$ with $1\leq i\leq 5$, the part $H_i$ contains neither red edges nor blue edges.
Then, by Lemma~\ref{Lemma:SmallOutside},
$$
|G|=|T| + \sum_{i=1}^5|H_i|\leq 5[n_{k - 2} - 1]<n_{k},
$$
a contradiction.

\begin{case}\label{Case:Pathr4}
$r=4$.
\end{case}

To avoid monochromatic triangle in $K_4$, without loss of generality, the four largest parts must form one of two structures:
\begin{itemize}
\item Type $1$: There is a red cycle $H_1H_3H_4H_2H_1$ and a blue $2$-matching on the edges $H_1H_4$, and $H_2H_3$ in the reduced graph, or
\item Type $2$: There is a red path $H_3H_2H_1H_4$ and a blue path $H_1H_3H_4H_2$ in the reduced graph.
\end{itemize}

If $r=m=4$, then using Lemma~\ref{Lemma:SmallOutside},
$$
|G|=|T| + \sum_{i=1}^4|H_i|\leq 4[n_{k - 2} - 1]<n_{k},
$$
a contradiction. So suppose $m>r=4$. This proof focuses on the reduced graph. For Type $1$, the subgraph of the reduced graph restricted to $\{H_{1}, H_{2}, H_{3}, H_{4}\}$ is not a subgraph of the unique $2$-colored copy of $K_{5}$ containing no rainbow triangle. This means that the subgraph of the reduced graph restricted to $\{H_{1}, H_{2}, \dots, H_{5}\}$ contains a monochromatic triangle, leading to a monochromatic copy of $P_{t}^{+}$ in $G$, a contradiction.

For Type $2$, outside of $\{H_1,H_2,H_3,H_4\}$, there are small parts $H_5,H_6,\ldots,H_m$. We may therefore assume that for all $H_i$ with $5\leq i\leq m$, we have that the edges $H_3H_i$ and $H_4H_i$ are blue. To avoid a blue triangle, the edges $H_1H_i$ and $H_2H_i$ are red. By minimality of $t$, we have $t = 5$ since all parts $H_{i}$ for $i \geq 5$ have the same color on edges to $H_{j}$ for $j \leq 4$. To avoid creating a monochromatic copy of $P_{t}^{+}$, none of these large parts contains any red or blue edges. 
Then using Lemma~\ref{Lemma:SmallOutside}, 
$$
|G|=|T| + \sum_{i=1}^4|H_i|\leq 4[gr_{k-2}(K_3;P_{t}^{+}) - 1]+\lceil t/2\rceil-1<n,
$$
a contradiction.

\begin{case}\label{Case:Pathr3}
$r=3$.
\end{case}

The triangle in the reduced graph cannot be monochromatic so without loss of generality, suppose $H_1H_2,H_1H_3$ are red, and $H_2H_3$ is blue. To avoid a red or blue triangle, any remaining parts are partitioned into the following sets.
\begin{itemize}
\item Let $A$ be the set of parts outside $H_1,H_2,H_3$ such that each has blue edges to $H_1,H_3$ and red edges to $H_2$,
\item Let $B$ be the set of parts outside $H_1,H_2,H_3$ such that each has red edges to $H_2,H_3$ and blue edges to $H_1$, and
\item Let $C$ be the set of parts outside $H_1,H_2,H_3$ such that each has blue edges to $H_1,H_2$ and red edges to $H_3$.
\end{itemize}
Note that $|G|=|T| + |A|+|B|+|C|+|H_1|+|H_2|+|H_3|$. If $A$ contains a blue edge, then using a long blue path between $H_{2}$ and $H_{3}$ along with a triangle formed using the blue edge in $A$, there would be a blue copy of $P_{t}^{+}$. This and similar easy arguments lead to the following fact.
\begin{fact}\label{Fact:Path2}~
\bi
\item $A$ contains no red or blue edges,
\item $B$ contains no red edges,
\item $C$ contains no red or blue edges,
\item $H_{1}$ contains no red edges,
\item $H_{2}$ contains no red or blue edges, and
\item $H_{3}$ contains no red or blue edges.
\ei
\end{fact}
Furthermore, we have the following claim.

\begin{claim}\label{Claim:Path}
If $B\neq \emptyset$, then $A=\emptyset$ and $C=\emptyset$.
\end{claim}

\begin{proof}
Assume, to the contrary, that $B \neq \emptyset$ and either $A\neq \emptyset$ or $C\neq \emptyset$, without loss of generality, say $A\neq \emptyset$. there is an red edge between $A$ and $B$, then there is a red triangle among $A,B,H_2$. This red triangle together with the red edges between $H_1$ and $H_2$ forms a red copy of $P_{t}^{+}$, a contradiction.
If there is a blue edge between $A$ and $B$, then there is a blue triangle among $A,B,H_1$. This blue triangle together a with blue path of the form $H_1CH_{2}$ and the edges between $H_2$ and $H_3$ forms a blue copy of $P_{t}^{+}$, a contradiction.
\end{proof}

From Claim~\ref{Claim:Path}, if $B\neq \emptyset$, then $A=\emptyset$ and $C=\emptyset$. We can then regard $H_1\cup B$ and $H_2\cup H_3$ as two parts of a Gallai partition of $G$ and the edges between these parts are all red, which contradicts the minimality of $t$.

We may therefore assume that $B=\emptyset$. Then we have the following claim.

\begin{claim}\label{Claim:PathOne}
There is only one part in $A$, and there is only one part in $C$.
\end{claim}

\begin{proof}
The edges between any pair of parts must be either red or blue but neither $A$ nor $C$ contain any red or blue edges by Fact~\ref{Fact:Path2}.
\end{proof}

Then by Lemma~\ref{Lemma:SmallOutside},
\beqs
|G| & = & |T| + |A|+|B|+|C|+|H_1|+|H_2|+|H_3| \\
~ & \leq & 3[n_{k - 2} - 1]+2(\lceil t/2\rceil-1)\\
~ & < & n_{k}, 
\eeqs
a contradiction.

\begin{case}\label{Case:Pathr2}
$r=2$.
\end{case}

Suppose all edges from $H_1$ to $H_2$ are red. To avoid creating a monochromatic copy of $P_{t}^{+}$, there is no part outside $H_1$ and $H_2$ with red edges to all of $H_1\cup H_2$. 
Therefore, any remaining parts are partitioned into the following sets.
\begin{itemize}
\item Let $A$ be the set of parts outside $H_1,H_2$ with blue edges to $H_2$ and red edges to $H_1$,
\item Let $B$ be the set of parts outside $H_1,H_2$ with blue edges to $H_1 \cup H_2$,
\item Let $C$ be the set of parts outside $H_1,H_2$ with blue edges to $H_1$ and red edges to $H_2$.
\end{itemize}
Note that $|G|=|A|+|B|+|C|+|H_1|+|H_2|$. Then we have the following fact.

Furthermore, we have the following claims.
\begin{claim}\label{Claim:PathNoBigSets}
$|A|\leq \lceil t/2\rceil-1$ and $|C|\leq \lceil t/2\rceil-1$. 
\end{claim}
\begin{proof}
Assume, to the contrary, that $|A|\geq \lceil t/2\rceil$. Then there are two parts in $A$, say $A',A''$. If the edges from $A'$ to $A''$ are red, then there is a red triangle among $A',A'',H_1$, together with the edges between $H_1$ and $H_2$, there is a red $P_{t}^{+}$, a contradiction. If the edges from $A'$ to $A''$ are blue, then there is a blue triangle among $A',A'',H_2$, there is a blue $P_{t}^{+}$ since $|A| \geq \lceil t/2\rceil$, a contradiction.
Similarly, there is only one part in $C$.
\end{proof}

\begin{claim}\label{Claim:PathSumSmall}
$|A|+|B|\leq t-1$ and $|B|+|C|\leq t-1$.
\end{claim}
\begin{proof}
Assume, to the contrary, that $|A|+|B|\geq t$. Then there are at least three parts in $A\cup B$. 
Since both $A$ and $B$ have blue edges to $H_{2}$, there can be no blue edges within $A \cup B$. This means there are only red edges appearing between the parts of the Gallai partition within $A \cup B$. With at least $t$ verices, there is a red copy of $P_{t}^{+}$ within $A \cup B$ for a contradiction.
\end{proof} 

From Claims~\ref{Claim:PathNoBigSets} and~\ref{Claim:PathSumSmall}, we have $|A|\leq \lceil t/2\rceil-1$, $|C|\leq \lceil t/2\rceil-1$, $|A|+|B|\leq t-1$, and $|B|+|C|\leq t-1$. By Lemma~\ref{Lemma:SmallOutside}, we have $|H_i|\leq gr_{k-1}(K_3;P_{t}^{+}) - 1$ with $i=1,2$, and $|H_1|+|C| \leq gr_{k-1}(K_3;P_{t}^{+}) - 1$ and $|H_1|+|A|+|B|\leq gr_{k-1}(K_3;P_{t}^{+}) - 1$. Again using Lemma~\ref{Lemma:SmallOutside}, we get
$$
|G|=|T| + |A|+|B|+|C|+|H_1|+|H_2| \leq 2(n_{k - 1} - 1)<n_{k},
$$
a contradiction. 

\begin{case}\label{Case:Pathr1}
$r=1$.
\end{case}

Let $A$ be the set of parts with blue edges to $H_1$, and $B$ be the set of parts with red edges to $H_1$.

If $|A|\geq \flr{\frac{t}{2} }$ and $|B|\geq \flr{\frac{t}{2}}$, then it follows from Claim~\ref{Claim:PathSumSmall} that $|A|\leq t+2$ and $|B|\leq t+2$. Since $A$ and $B$ are each large, it follows that there are neither red edges nor blue edges in $H_1$, and hence $|H_1|\leq n_{k - 2} - 1$. Then by Lemma~\ref{Lemma:SmallOutside},
$$
|G|= |T| + |H_1|+|A|+|B|\leq n_{k - 2}-1+(2t+4)<n_{k},
$$
a contradiction.

If $|A|\geq \flr{\frac{t}{2} }$ and $|B|\leq \flr{\frac{t}{2} } -1$, then it follows from Claim~\ref{Claim:Path} that $|A|\leq t+2$. Since $A$ is big, it follows that there are no red edges in $H_1$, and hence $|H_1|\leq n_{k - 1}-1$.
By Lemma~\ref{Lemma:SmallOutside}, 
$$
|G|=|T| + |H_1|+|A|+|B|\leq n_{k - 1} - 1 +(t+2)+(\flr{\frac{t}{2} } -1)<n_{k},
$$
a contradiction.

Finally if $|A|, |B| \leq \flr{ \frac{t}{2}} - 1$, both sets can be added to $T$, contradicting the maximality of $T$.

\begin{case}\label{Case:Pathr0}
$r=0$.
\end{case}

In this case, we have $|H_i|\leq \lceil t/2\rceil-1$ for all $i$ with $1 \leq i \leq m$. We need only consider the case $k=3$ since the parts are too small to contain a monochromatic copy of $P_{t}^{+}$ in a color other than red or blue. 
Then $n=5t-4$ so $|G'| \geq 4t - 2$. Note that this means there are at least $9$ parts in the Gallai partition of $G'$, so in particular, there must be either a red or blue triangle in the reduced graph.

Suppose first that there is both a red triangle and blue triangle in $G'$. Select one such triangle in each color and remove their vertices. By deleting at most six vertices, we still have at least $4t-8$ vertices in $G'$. Recall that by minimality of $m$, the edges of $G$ restricted to either red or blue induce a connected subgraph. 
From Theorem~\ref{Thm:GR-EvenCycles}, if $t-2$ is even, then $gr_{k}(K_3;C_{t-2})\leq 3t-9$ and if $t-3$ is even, then $gr_{k}(K_3;C_{t-3})\leq 3t-9$. This means $G'$ contains an even cycle $C_{t-2}$ or $C_{t-3}$. This cycle together with the deleted triangle form a red $P_{t}^{+}$ or blue $P_{t}^{+}$ (since each color induces a connected subgraph), a contradiction.

Thus, suppose that there is a red triangle but no blue triangles in $G'$. Choose an arbitrary set $H_{i}$ and let $G_{R}$ be the set of vertices with red edges to $H_{i}$ and $G_{B}$ be the set of vertices with blue edges to $H_{i}$. Note that $G_{B}$ contains no blue edge so if $|G_{B}| \geq t$, then since all parts have order at most $\ceil{t/2} - 1$, there are at least $3$ parts in $G_{B}$ and all red edges in between these parts, creating a red copy of $P_{t}^{+}$. Thus, $|G_{B}| \leq t - 1$.

Since $H_{i}$ was chosen arbitrarily, this is true about every such part. Let $D$ be the assumed red triangle. This means that every vertex in $G' \setminus D$, say $v \in H_{i}$, has red degree at least
$$
n_{k} - |T| - 3 - |H_{i}| - (t - 1) \geq \frac{|G' \setminus D|}{2}.
$$
By Dirac's Theorem \cite{MR0047308}, there exists a red Hamiltonian cycle within $G' \setminus D$. This cycle along with the red triangle $D$ (since the red subgraph is connected), produces a red copy of $P_{t}^{+}$, a contradiction to complete the proof.
\end{proof}


\end{document}